\newcommand{\Z}{\mathbb{Z}}
\newcommand{\q}{\mathfrak{q}}
\newcommand{\g}{\gamma}
\newcommand{\G}{\Gamma}
\newcommand{\R}{\mathbb{R}}
\newcommand{\Q}{\mathcal{Q}}
\newcommand{\w}{\omega}
\newtheorem{mylem}{Lemma}
\newtheorem{mythm}{Theorem}
\newtheorem{mydef}{Definition}
\newtheorem{myprop}{Proposition}
\theoremstyle{definition}
\newtheorem*{myrmk}{Remark}
\title{Almost Primes in Thin Orbits of Pythagorean Triangles}
\author{Max Ehrman}
\begin{document}
\maketitle
\begin{abstract}
Let $F=x^2+y^2-z^2$, $x_0 \in \Z^3$ primitive with $F(x_0)=0$, and $\G \leq SO_F(\Z)$ be a finitely generated thin subgroup. We consider the resulting thin orbits of Pythagorean triples $x_0 \cdot \G$ -  specifically which hypotenuses, areas, and products of all three coordinates arise. We produce infinitely many $R$-almost primes in these three cases whenever $\G$ has exponent $\delta_\G>\delta_0(R)$ for explicit $R$, $\delta_0$. 
\end{abstract}
\tableofcontents
\section{Introduction}
\subsection*{The Affine Sieve}
Consider a finitely generated group $\Gamma \leq GL(n, \Z)$, a base point $x_0 \in \Z^n$, and form the orbit $\mathcal{O} = x_0 \cdot \Gamma$. We call $\Gamma$ $thin$ if it is of infinite index in the $\Z$ points of its Zariski closure. Let $f \in \mathbb{Q}[x_1, \cdots, x_n]$ be integral on $\mathcal{O}$, and look at $f(\mathcal{O}) \subset \Z$. We say that $(\mathcal{O}, f)$ is $primitive$ if for every $q$, there exists $x \in \mathcal{O}$ such that $(f(x), q) = 1$, and further impose this condition on our $(\mathcal{O}, f)$. Let $\mathcal{P}_R$ denote the set of $R$-almost primes - that is, integers with at most $R$ prime factors. Now, the sets $\mathcal{O}(f, R) = \{ x \in \mathcal{O} : f(x) \in \mathcal{P}_R\}$ increase as $R \to \infty$, and so we wonder if there exists some finite $R$ such that $\mathcal{O}(f, R)$ is Zariski dense in the orbit $\mathcal{O}$. Precisely, we seek an $R$  such that $Zcl(\mathcal{O}(f, R)) = Zcl(\mathcal{O})$, where $Zcl$ denotes the Zariski closure. If this happens, we denote the least such $R$ by $R_0(\mathcal{O}, f)$, and call it the $saturation$ $number$. Bourgain, Gamburd, and Sarnak originally studied this problem in \cite{BGS10}, proving that for forms of $GL_2$ in the above context $R_0(\mathcal{O}, f)$ always exists. Salehi Golsefidy and Sarnak later settled this for $GL_n$ and more generally in \cite{SS13}. Their method was to order by word length and use combinatorics to produce a spectral gap for $\G$. While effective, this mechanism in some instances produces estimates on $R_0$ that are weaker than what can be done using Archimedean theory. In \cite{S08}, Sarnak first cited the hypotenuses and areas of thin Pythagorean triangles as particularly interesting examples to optimize $R_0$ for, and we improve the best known bound for the area here.
\\ \indent Let $F(x, y, z) = x^2 + y^2 - z^2$. We consider the cone $F=0$ of all real Pythagorean triples, and the action of the stabilizer $SO_F(\R)$ on these triples. We call an integral triple $(x, y, z)$ $primitive$ if $(x, y, z)=1$. There is an ancient parametrization of these triples dating back to the Babylonians, given by 
\begin{align*}
(c, d) \Longleftrightarrow (x, y, z) = (d^2-c^2, 2cd, c^2 + d^2) \\
\end{align*}
Let $SO_F^\circ (\R)$ be the connected component of the identity in $SO_F(\R)$. This parametrization is equivalent to the spin double cover $\iota: SL(2, \R) \to SO^{\circ}_F(\R)$ given by 
\begin{align*}
\begin{pmatrix} a & b \\ c& d
\end{pmatrix}
\mapsto \begin{pmatrix} \frac{1}{2}(a^2 - b^2 - c^2 + d^2) & cd - ab & \frac{1}{2} (-a^2 - b^2 + c^2 + d^2) \\
bd-ac & bc+ad & ac+bd  \\
\frac{1}{2}(-a^2 + b^2 - c^2 + d^2) & ab+cd & \frac{1}{2} (a^2 + b^2 + c^2 + d^2) \\
\end{pmatrix}
\end{align*}
In fact, this double cover restricts to an isomorphism  $PSL(2, \R) \cong SO^\circ_F(\R)$. Now let $X_0 = \begin{pmatrix} 1 & 0 & 1 \end{pmatrix}$, and let $x_0 = \begin{pmatrix} 0 & 1 \end{pmatrix}$. We have $X_0 \cdot \iota (\g) = (x(x_0 \cdot \g), y(x_0 \cdot \g), z(x_0 \cdot \g))$.
We will consider the following three functions from $(c, d) \in \Z^2 \to \Z$:
 \begin{align*}
 \text{the hypotenuse} & = z  = c^2 + d^2\\
 \text{the area}& = \frac{1}{12} xy = \frac{1}{6}cd(d^2-c^2) \\
 \text{the product of coordinates} &= \frac{1}{60}xyz =\frac{1}{30}cd(d^2-c^2)(d^2+c^2)  \\
 \end{align*}
 The factors of $12$ and $60$ are there so that our orbits will be primitive - one can easily check that they will still be integral. 
Now, setting up our affine sieve type orbit, let $\G \leq SL(2, \Z)$ be finitely generated and thin, let $f$ be one of our three functions, and let $\mathcal{O} = x_0 \cdot \G$.  
\begin{mydef} For a discrete group $\Gamma \leq SL(2, \Z)$, the $critical$ $exponent$ $\delta_\Gamma$ is the abscissa of convergence of the Poincare series $L_{\G}(s) = \displaystyle\sum_{\g \in \G} \frac{1}{||\g(z)||^{2s}}$ for any norm $|| \cdot ||$ on $M_{3 \times 3}(\R)$.
\end{mydef}
 Patterson and Sullivan proved that it is equivalently the Hausdorff dimension of the limit set $\Lambda_\Gamma \subset \partial \mathbb{H}$ \cite{P76} \cite{S84}. This $\delta_\G$ is a measure of how `thin' our groups are - closer to 1 is less thin, and $\G$ is a lattice if and only if $\delta_\G =1$. We will assume $\Gamma$ is finitely generated, thin, and has no parabolic elements throughout. This last assumption is due to the fact that Kontorovich showed in \cite{K09} that if $\G$ has parabolic elements, classical methods yield better results.
Kontorovich \cite{K09} and Kontorovich-Oh \cite{KO12} first analyzed this problem,  finding
\begin{mythm}[Kontorovich \cite{K09}, Kontorovich-Oh \cite{KO12}] Fix notation as above. Then for $\G$ with $\delta>\delta_0$, 
\begin{align*}
R_0(\mathcal{O}, f) &\leq \begin{cases} 13 \mbox{ if } f=z & \delta_0 = .9992 \\
40 \mbox{ if } f=\frac{1}{12}xy & \delta_0 = .9958 \\
58 \mbox{ if } f=\frac{1}{60}xyz &  \delta_0= .9974\\
\end{cases}
\end{align*}
\end{mythm}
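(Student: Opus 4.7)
The plan is to treat this as an affine sieve problem, ordering orbit elements by archimedean height and applying a combinatorial sieve to the values of $f$ on the orbit $\mathcal{O}$. For a height parameter $T$, one forms the sifting set $\mathcal{A}_T = \{f(X_0 \cdot \iota(\g)) : \g \in \G, \|\g\| < T\}$ and, for squarefree $q$, the level set $\mathcal{A}_T(q) = \{a \in \mathcal{A}_T : q \mid a\}$. The goal is to establish an asymptotic $|\mathcal{A}_T(q)| = \beta(q)q^{-1}|\mathcal{A}_T| + E(T,q)$ for a multiplicative local density $\beta$, together with the averaged level-of-distribution bound $\sum_{q<T^\alpha} |E(T,q)| \ll |\mathcal{A}_T|(\log T)^{-A}$ for some $\alpha>0$ and every $A>0$. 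Once $\alpha$ and the sieve dimension $\kappa$---the number of distinct irreducible factors of $f$ over the natural coordinate ring---are in hand, the weighted combinatorial sieve of Rosser-Iwaniec bounds $R_0 \leq R(\kappa,\alpha)$.

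For the unconditional count I would invoke Patterson-Sullivan theory and a Lax-Phillips spectral decomposition of $L^2(\G\backslash\mathbb{H})$: for $\delta_\G > 1/2$ one obtains $|\{\g \in \G : \|\g\| < T\}| = c_\G T^{2\delta_\G}(1 + O(T^{-\eta}))$, with the power saving $\eta$ driven by the spectral gap between the base eigenvalue $\delta_\G(1-\delta_\G)$ and the next. To upgrade this to the congruence level set $\mathcal{A}_T(q)$ one needs the spectral gap uniform over the tower $\{\G(q)\}$, which is the expander-from-thin-groups theorem of Bourgain-Gamburd-Sarnak. The Archimedean nature of this input is precisely what allows one to beat the purely combinatorial word-length spectral gap used in \cite{SS13}, and is the reason this method sharpens the general bound in the present setting. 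The resulting $\alpha$ depends sharply on $\delta_\G$: the effective $\eta$ degrades as $\delta_\G$ decreases, so the hypothesis $\delta_\G > \delta_0$ with $\delta_0$ near $1$ is precisely the threshold required to push $\alpha$ large enough to activate the sieve.

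The main obstacle, and the source of the numerical discrepancy between the three cases, is the interplay between $\kappa$ and $\alpha$. For $f=z=c^2+d^2$, factorization in $\Z[i]$ effectively doubles $\kappa$ but also permits direct Gaussian-integer sieving, keeping $R_0$ small. For the area $\tfrac{1}{6}cd(d-c)(d+c)$ the polynomial already splits over $\Q$ into four linear factors, pushing $\kappa$ up and forcing the more delicate threshold $\delta_0=.9958$; for the product $\tfrac{1}{30}cd(d-c)(d+c)(c^2+d^2)$ one inherits both obstructions simultaneously. In each case one inserts the explicit $\alpha(\delta_\G)$ arising from the uniform BGS gap into the weighted combinatorial sieve, tracks the local densities $\beta(p)$ at the primes dividing the normalizing constants $12$ and $60$ to verify primitivity of $(\mathcal{O},f)$, and finally optimizes the truncation parameters of the sieve to read off the stated triples $(R,\delta_0)$.
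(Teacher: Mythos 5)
Theorem 1 is a citation, not a result the paper proves; the authors simply attribute it to Kontorovich \cite{K09} and Kontorovich-Oh \cite{KO12}. There is therefore no ``paper's own proof'' to compare against in detail, though the paper does sketch the cited mechanism in Section 3 (equations (1)--(2), decomposing along the subgroups $\G_{x_0}(q)$ and invoking Theorem 5). Your high-level reconstruction---ordering by Archimedean norm, applying Lax-Phillips/Patterson-Sullivan counting with base eigenvalue $\delta_\G(1-\delta_\G)$, upgrading to congruence covers via the Bourgain-Gamburd-Sarnak uniform spectral gap, extracting an exponent of distribution $\alpha(\delta_\G)$, and feeding it to a weighted combinatorial sieve of dimension $\kappa$---is the correct skeleton and is consistent with what the paper describes.

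There is, however, a genuine error in your treatment of the sieve dimension for $f=z$. You assert that ``factorization in $\Z[i]$ effectively doubles $\kappa$.'' This is wrong: $z=c^2+d^2$ is irreducible over $\Q[c,d]$, and the paper (Section 6) explicitly takes $\kappa=1$ for this case, which is precisely why Greaves' linear sieve applies and $R_0$ is so much smaller than for the area or product. The $\Z[i]$ factorization enters only through the local densities---it forces $\beta_f(p)=0$ unless $p\equiv 1\pmod 4$---but the relevant average of $p\beta(p)$ over primes is still $1$, so $\kappa=1$, not $2$. For the area and product the dimensions are $\kappa=4$ and $\kappa=5$ respectively (the quadratic factor $c^2+d^2$ counting once, not twice). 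You should also replace ``Rosser-Iwaniec'' with the sieves actually used here: Greaves' linear sieve for $\kappa=1$ and the Diamond-Halberstam-Richert sieve for $\kappa\geq 2$; these are relatives of the beta-sieve but the distinction matters for recovering the specific numerical values of $R_0$ and $\delta_0$ quoted in the theorem.
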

Later, Bourgain and Kontorovich improved what is known about thin hypotenuses, going `beyond expansion' by utilizing the dispersion method and bilinear forms in order to prove the following: 
\begin{mythm}[Bourgain-Kontorovich \cite{BK15}]
If $\Gamma$ satisfies $\delta_\Gamma>\delta_0=1-10^{-17}$, then $R_0(\mathcal{O}, f) \leq 4$ for $f=z$. 
\end{mythm}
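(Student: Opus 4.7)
The plan is to establish a level of distribution $\theta$ sufficiently close to $1$ for the hypotenuse function, then feed this into a weighted sieve such as that of Richert to extract $4$-almost primes. More precisely, writing $N = N(T) \asymp T^{2\delta_\G}$ for the main-term size of $\#\{\g \in \G : \|\g\| \leq T\}$ (given by Lax--Phillips / Patterson--Sullivan counting), I would seek $\theta > \theta_0$ such that
\begin{equation*}
\sum_{q \leq N^{\theta}} \Bigl| \#\{\g \in \G : \|\g\| \leq T,\ z(X_0 \iota(\g)) \equiv 0 \pmod q\} - \beta(q)\cdot \#\{\g \in \G : \|\g\| \leq T\} \Bigr| \ll \frac{N}{(\log N)^{A}},
\end{equation*}
where $\beta(q)$ is the local multiplicative density. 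For $R=4$, the Rosser--Iwaniec / Richert machinery in dimension $1$ requires $\theta$ exceeding some explicit constant of size roughly $2/3$, so the game is to push $\theta$ past the barrier allowed by ``pure expansion.''

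In the \emph{expansion range} $q \leq T^{\eta}$ for small $\eta$, I would invoke super-strong approximation (Bourgain--Gamburd--Sarnak) together with an effective mixing estimate for the $\G(q)$-cover of $\G\backslash\mathbb{H}$, giving power savings in each congruence class. This alone yields a valid level of distribution, but only below a threshold governed by the spectral gap, which is why Kontorovich--Oh obtained $R=13$ rather than $4$. To go ``beyond expansion,'' I would next apply a dispersion-method decomposition in the spirit of Linnik and Bombieri--Friedlander--Iwaniec: factor a generic $\g$ with $\|\g\| \leq T$ as $\g = \g_1 \g_2$ using a canonical Schottky/Cayley-type splitting adapted to $\G$ so that the marginals are approximately independent at the Archimedean place, and then rewrite $z(X_0\iota(\g_1\g_2))$ as a quadratic bilinear expression in the entries of $\g_1$ and $\g_2$.

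The main analytic task is then a bilinear form estimate
\begin{equation*}
B(Q) = \sum_{q \sim Q} \sum_{\g_1}\sum_{\g_2} \alpha(\g_1)\beta(\g_2)\, e_q\!\bigl( \xi \cdot z(X_0 \iota(\g_1\g_2))\bigr) \ll N^{1-\epsilon},
\end{equation*}
uniformly for arbitrary bounded coefficients $\alpha,\beta$ and all $Q$ up to $N^{\theta}$. Cauchy--Schwarz in the longer variable reduces this to a second-moment, in which the diagonal contribution is controlled by Patterson--Sullivan counting in the $\G \times \G$-orbit, while the off-diagonal is handled by Weil-type bounds for character sums of the binary quadratic form $c^2+d^2$ modulo $q$, inherited from the Gaussian integer structure. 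The extreme closeness of $\delta_\G$ to $1$ (the condition $\delta_\G > 1-10^{-17}$) is used precisely to swallow all losses arising from the opening of Cauchy--Schwarz and from the non-Archimedean error terms that degrade as the spectral gap shrinks.

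The hardest step will be the bilinear form bound. Unlike the classical prime setting where the variables are integers and $c^2+d^2$ is a quadratic form directly in those integers, here $\g_1$ and $\g_2$ range over a geometric orbit in $SL(2,\Z)$ and the hypotenuse depends polynomially on \emph{all four} entries of each factor, so the factorization must be arranged so that $z(\g_1\g_2) \bmod q$ genuinely separates into a bilinear form amenable to Cauchy--Schwarz, and the orbit-counting inputs at each stage must be refined to smooth weights with a near-optimal error. Once that bilinear estimate is in hand, summing the Type-I (expansion-range) and Type-II (bilinear-range) contributions gives the target level of distribution, and Richert's weighted sieve produces infinitely many $\g\in\G$ with $z(X_0\iota(\g))\in \mathcal{P}_4$, completing the proof.
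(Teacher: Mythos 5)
Your proposal captures the broad strategy that Bourgain--Kontorovich (and the present paper building on them) actually use: establish a level of distribution for the hypotenuse along the orbit, go beyond the expansion range via a dispersion-method factorization and bilinear form estimates, and feed the resulting exponent into an almost-prime sieve. However, several of the quantitative and structural specifics are off in ways that matter.

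First, the sieve threshold is wrong by a large margin. You write that the linear-dimension machinery ``requires $\theta$ exceeding some explicit constant of size roughly $2/3$'' for $R=4$; in fact the paper uses Greaves' linear sieve, which produces $4$-almost primes once the level exceeds $\frac{1}{4-0.103974}\approx 0.257$, and the level of distribution that \cite{BK15} actually prove is $\frac{7}{24}-\epsilon \approx 0.29$ (the present paper improves this to $\frac{5}{16}-\epsilon$). A level of $2/3$ is far beyond what the method (or any known method) delivers here.

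Second, the factorization you invoke is more exotic than what is used. You speak of a ``canonical Schottky/Cayley-type splitting'' of a generic $\g$ with $\|\g\|\leq T$. The actual mechanism, spelled out in Section~3 of the paper, is much blunter: replace the ball of radius $T$ by the \emph{multi-set} of products $\{\g\w : \g\in\G,\ \|\g\|\lesssim X,\ \w\in\Omega_Y\}$ with $XY=T$, using the smoothed counting weight $\Upsilon_X$ on the $\g$-factor and the sharp ball $\Omega_Y$ on the $\w$-factor. This changes what is being sieved; there is no attempt to factor a fixed $\g$ into independent pieces, and no Schottky structure is used.

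Third, and most importantly, you omit the decisive technical move that makes the Type-II (bilinear) range tractable. After opening Cauchy--Schwarz in $\g$, the paper exploits that $f(x_0\cdot\g\w)$ depends only on the bottom row $(c,d)$ of $\g$, and so the sum over $\g\in\G$ is \emph{relaxed to a free sum over all $(c,d)\in\Z^2$} weighted smoothly. This is exactly what licenses Poisson summation, producing the complete exponential sums $S_1,\dots,S_5$ that carry the savings; the split into $\bar q\leq X$ versus $\bar q>X$, and the elementary bounds on $S_4$ involving $(f_\w(l,-k),q)$, are the engine of the Type-II estimate. Your description instead treats the second moment by ``diagonal controlled by Patterson--Sullivan counting in $\G\times\G$'' and ``off-diagonal by Weil-type bounds,'' which conflates the $\g$-variable (where the orbit is abandoned entirely) with the $\w,\w'$-variables (where Theorem~5's group counting is applied via Proposition~3). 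Without the relaxation-then-Poisson step, the bilinear form bound you state has no route to being established, and the entire beyond-expansion gain evaporates.

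So: right skeleton (dispersion, bilinear, sieve), right diagnosis of why the condition $\delta_\G>1-10^{-17}$ is there (to absorb spectral-gap and Cauchy--Schwarz losses), but the quantitative target and the central bilinear mechanism are both misidentified.
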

Finally, Hong and Kontorovich discovered a larger stabilizing subgroup to decompose the sum along and executed this in all three cases, producing improvements in the remaining two cases:
\begin{mythm}[Hong-Kontorovich \cite{HK15}]
If $\Gamma$ satisfies $\delta_\G>\delta_0$, we have 
\begin{align*} R_0(\mathcal{O}, f) &\leq \begin{cases} 25 \mbox{ if } f=\frac{1}{12}xy & \delta_0 = .99994\\
37 \mbox{ if } f=\frac{1}{60}xyz & \delta_0 = .99513\\
\end{cases}
\end{align*}
\end{mythm}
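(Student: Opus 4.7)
The plan is to implement the affine sieve of Bourgain--Gamburd--Sarnak, but following the Hong--Kontorovich observation, decompose the orbit sum along cosets of a one-parameter subgroup $\Xi \le SL(2,\R)$ whose image in $SO_F(\R)$ preserves the target $f$ on the null cone. This decomposition yields an effectively larger counting dimension and hence a better level of distribution than summing over $\G$ directly.

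Concretely, the affine sieve reduces bounding $R_0(\mathcal{O},f)$ to establishing a level of distribution $\alpha>0$ for the sequence $\{f(X_0 \iota(\g)) : \g \in \G,\ \|\g\| \le T\}$, i.e.\ an estimate
\[
\sum_{q \le T^\alpha} \Big| \#\{\g \in \G : \|\g\| \le T,\; q \mid f(X_0 \iota(\g))\} - \beta(q) N(T) \Big| \ \ll\ \frac{N(T)}{(\log T)^A}
\]
for every $A$, where $N(T) = \#\{\g : \|\g\| \le T\}$ and $\beta(q)$ is the local density at $q$. Selberg's upper bound sieve then gives $R_0(\mathcal{O},f) \le \deg(f)/\alpha \cdot (1+o(1))$, so the targets $R_0 \le 25$ and $R_0 \le 37$ translate, for $\deg f = 4$ and $6$, to $\alpha \gtrsim 4/25$ and $\alpha \gtrsim 6/37$ respectively. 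The main term uses $N(T) \sim c_\G T^{2\delta}$ (Lax--Phillips with effective remainder on $\G \backslash \mathbb{H}$), and the error is controlled by passing to $\G/\G(q)$ and invoking the Bourgain--Gamburd--Sarnak / Salehi Golsefidy--Varj\'u expander theorem, which supplies a uniform spectral gap on $\ell^2(\G/\G(q))$ whenever $\delta_\G$ is close enough to $1$, yielding a power saving in $q$.

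The Hong--Kontorovich refinement enters by writing
\[
\{\g \in \G : \|\g\| \le T\} = \bigsqcup_{[\g] \in (\Xi_f \cap \G) \backslash \G} \{\xi\g : \xi \in \Xi_f \cap \G,\; \|\xi \g\| \le T\},
\]
where $\Xi_f \le SO_F(\R)$ preserves $f$ on the cone. One identifies $\Xi_f$ by inspection of the polynomial --- a unipotent-type subgroup for the area, and an analogous one-parameter subgroup for the product. Because $f$ is $\Xi_f$-invariant, the inner $\xi$-sum commutes with the congruence condition $q \mid f$ and, via Archimedean harmonic analysis on $\Xi_f$, contributes an extra factor in the main term; this effectively raises the dimension of the sum and hence the achievable $\alpha$, which is exactly what distinguishes the Hong--Kontorovich bounds from the earlier Kontorovich--Oh ones ($40$ and $58$).

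The principal obstacle is extracting enough power saving in $q$ from the spectral gap: the thresholds $\delta_0 = 0.99994$ and $0.99513$ are precisely the points where the resulting $\alpha(\delta)$ just crosses $4/25$ and $6/37$ respectively, with the explicit constants depending intricately on the spectral gap constant, the degree of $f$, and the dimension of $\Xi_f$. One must track the uniformity of the power saving across the full range $q \le T^\alpha$, and any further lowering of $\delta_0$ would require either a sharper combinatorial sieve or a stronger Archimedean input, both substantial technical undertakings.
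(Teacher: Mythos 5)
Your high-level framework (affine sieve, level of distribution, spectral gap from Bourgain--Gamburd--Sarnak super-approximation, $N(T)\sim c_\G T^{2\delta}$) is the right scaffolding, but you have misidentified the key Hong--Kontorovich innovation, which is the entire content of the improvement from Kontorovich--Oh's $40,58$ to $25,37$. Hong and Kontorovich do \emph{not} decompose along cosets of a one-parameter real subgroup $\Xi_f \le SL(2,\R)$ stabilizing $f$ on the cone. Indeed, for a thin $\G$ with no parabolics, such a one-parameter subgroup would typically intersect $\G$ trivially, so the decomposition you wrote would be vacuous. What they actually do is replace the modular stabilizer $\G_{x_0}(q)=\{\g : x_0\g \equiv x_0\ (q)\}$ by the larger \emph{projective} mod-$q$ stabilizer
\[
\G_{\langle x_0\rangle}(q) = \{\g \in \G : x_0\g \equiv a\,x_0 \ (q),\ a\in(\Z/q\Z)^\times\},
\]
which is a congruence subgroup of $\G$ of index $\prod_{p\mid q}(p+1)$ rather than roughly $q^2$. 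The reason this is the right group is arithmetic, not Archimedean: since $f$ is homogeneous of degree $D$, $f(x_0\g\g_0)=f(a\,x_0\g_0)=a^D f(x_0\g_0)$, and multiplying by a unit $a^D$ mod $q$ preserves divisibility by $q$; thus $f(x_0\g\g_0)\equiv 0\ (q) \iff f(x_0\g_0)\equiv 0\ (q)$ for $\g\in\G_{\langle x_0\rangle}(q)$. Summing along cosets of this smaller-index subgroup and invoking the effective count (Theorem 5 in this paper) with error $O(T^{2\theta})$ yields a level of distribution close to $(\delta-\theta)/D$, which is the quantity fed into the sieve.

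Two smaller issues: first, the passage from $\alpha$ to $R_0$ is not simply $R_0 \le \deg(f)/\alpha$; one must apply a lower-bound sieve, namely Greaves' linear sieve for dimension $\kappa=1$ and the Diamond--Halberstam--Richert weighted sieve for $\kappa=4$ (area) and $\kappa=5$ (product), with thresholds determined by the explicit sieve constants $\beta_\kappa$. Selberg's upper bound sieve alone does not produce almost primes. Second, the quantitative values $\delta_0 = 0.99994$ and $0.99513$ come out of optimizing over $\theta$ (taking $\theta=5/6$ via Gamburd for $\delta>5/6$) and the sieve weights, not from a ``dimension of $\Xi_f$'' as you suggest; there is no $\Xi_f$ in the argument.
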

In this paper, we primarily build on \cite{BK15}. We take their method and improve it by integrating the ideas of \cite{HK15} and more importantly by further analyzing the bilinear forms that arise. Additionally, we adapt it to be able to handle the cases of the area and the product of coordinates, relying on a key new observation that these problems can be attacked similarly. Our main theorem is
\begin{mythm}
If $\Gamma$ has no parabolic elements and $\delta_\Gamma>\delta_0$, we have 
\begin{align*}
R_0(\mathcal{O}, f) \leq \begin{cases} 4  \mbox{} \text{ if   f=z}  & \delta_0 = .984 \\
18 \mbox{} \text{ if   f=$\frac{1}{12}$xy} & \delta_0 =.9955  \\
26  \mbox{} \text{   f=$\frac{1}{60}$xyz} & \delta_0 =.9963  \\
\end{cases}
\end{align*}
\end{mythm}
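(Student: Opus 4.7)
The plan is to extend the Bourgain-Kontorovich framework from \cite{BK15} to all three functions $z$, $\frac{1}{12}xy$, $\frac{1}{60}xyz$ simultaneously, fusing it with the enlarged-stabilizer trick of Hong-Kontorovich \cite{HK15} and then squeezing extra savings out of the bilinear forms that arise. I would begin by introducing a smooth archimedean count
\[
\mathcal{R}_N(n) = \sum_{\gamma \in \Gamma} \Psi(\gamma/N)\, \mathbf{1}\{f(x_0\cdot\gamma)=n\},
\]
whose total mass is of order $N^{2\delta_\Gamma}$, and then decompose the orbit sum along cosets of a one-parameter subgroup $H \subset SL_2$ under which $f$ is invariant: the diagonal torus for $f=z$, and the finite-index extensions of $\Gamma_{x_0}$ exploited in \cite{HK15} for the other two, using that $xy$ and $xyz$ pick up extra symmetries in the parametrization $(c,d)\mapsto(d^2-c^2,2cd,c^2+d^2)$. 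This rewrites $\mathcal{R}_N(n)$ as a bilinear sum with two factors each of length on the order of $\sqrt{N}$.

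The main term along arithmetic progressions $n \equiv a \pmod q$ comes from strong approximation together with the Bourgain-Gamburd spectral gap modulo $q$, yielding an asymptotic of shape $\mathfrak{S}_q(a)\,\mathcal{M}_N$ with error of the form $N^{2\delta_\Gamma - \eta}q^{C}$, which is only nontrivial for $q$ up to a small fixed power of $N$. To push the level of distribution $Q$ past this barrier I would apply Cauchy-Schwarz in the dispersion-method style: after expanding the square, the diagonal produces a new main term and the off-diagonal reduces to estimating incomplete sums of the form $\sum_{h_1,h_2\in H} K(h_1 h_2^{-1};q)$, where $K$ is essentially a Kloosterman / matrix-coefficient kernel on $SL_2(\mathbb{F}_q)$. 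The sharpening over \cite{BK15} would come from factoring $q=q_1 q_2$ with $q_1$ smooth and small, applying the sum-product expansion bound on the $q_1$-slice and Weil-type bounds on the $q_2$-slice, and balancing to maximize the exponent. Combined with the larger stabilizer from \cite{HK15}, this should yield a level of distribution of the form $Q = N^{\beta(\delta_\Gamma)}$ with $\beta$ explicit.

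With this input I would feed the count into a Richert-type weighted sieve (Diamond-Halberstam-Richert), concluding $R_0(\mathcal{O},f)\le R$ whenever $2\delta_\Gamma \cdot \deg f / \beta(\delta_\Gamma) < R + o(1)$. The degrees $2$, $4$, $6$ of $f$ in $(c,d)$ are what force the area and product cases to require $\delta_\Gamma$ so close to $1$, and numerical optimization then outputs the tuples $(R,\delta_0)=(4,.984)$, $(18,.9955)$, $(26,.9963)$ claimed in the theorem.

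The principal obstacle is the bilinear step. The dispersion exchange turns the problem into bounding an exponential-sum kernel on the congruence quotient $\Gamma/\Gamma(q)$, and to beat the high degree of $f$ in the area and product cases one needs a power saving that is genuinely stronger than what either pure expansion or pure Weil estimates deliver in isolation. Extracting this saving, tracking it with explicit constants through the sieve weights, and making sure the gains survive the passage from the word-length ordering native to expansion results to the archimedean ordering used here---that is where the numerical thresholds $\delta_0$ are actually decided, and where I expect the bulk of the technical effort to lie.
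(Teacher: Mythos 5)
Your high-level outline is in the right family: you correctly identify that the paper fuses the bilinear dispersion framework of \cite{BK15} with the enlarged stabilizers $\Gamma_{\langle x_0\rangle}(q)$ from \cite{HK15}, and then feeds a level of distribution into the Greaves and Diamond-Halberstam-Richert sieves. But you are missing the single most important new idea in the paper, and your proposed route around it would not reproduce the stated $(R,\delta_0)$ tuples.

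The key observation the paper makes --- and explicitly advertises as the new ingredient that makes the area and product-of-coordinates cases tractable --- is purely local: for $q$ odd squarefree, no two of the three quadratics $x=d^2-c^2$, $y=2cd$, $z=c^2+d^2$ can vanish simultaneously modulo $q$ on a row $(c,d)$ of an $SL_2(\mathbb{Z})$ matrix. Consequently $\mathbf{1}_{\{xy\equiv 0\,(q)\}}=\mathbf{1}_{\{x\equiv 0\,(q)\}}+\mathbf{1}_{\{y\equiv 0\,(q)\}}$ and $\mathbf{1}_{\{xyz\equiv 0\,(q)\}}=\mathbf{1}_{\{x\equiv 0\,(q)\}}+\mathbf{1}_{\{y\equiv 0\,(q)\}}+\mathbf{1}_{\{z\equiv 0\,(q)\}}$, so the sieve counts for the degree-$4$ area and degree-$6$ product literally decompose as $|\mathcal{A}_{\mathfrak{q}}^{xy}|=|\mathcal{A}_{\mathfrak{q}}^{x}|+|\mathcal{A}_{\mathfrak{q}}^{y}|$ and $|\mathcal{A}_{\mathfrak{q}}^{xyz}|=|\mathcal{A}_{\mathfrak{q}}^{x}|+|\mathcal{A}_{\mathfrak{q}}^{y}|+|\mathcal{A}_{\mathfrak{q}}^{z}|$. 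All the bilinear / exponential-sum work is then carried out only for the three degree-$2$ forms $x,y,z$; the degrees $4$ and $6$ enter at the very end, merely in the normalization $N=T^D$ when converting $T$-exponents to $N$-exponents for the sieve. Your proposal instead treats $xy$ and $xyz$ directly via ``extra symmetries in the parametrization'' and ``finite-index extensions of $\Gamma_{x_0}$,'' which is essentially the route of \cite{HK15}. That route does work, but it provably does not reach $R_0\le 18$ or $R_0\le 26$ --- it is exactly the method that gives the weaker $25$ and $37$ in Theorem 3 --- because without the indicator decomposition you are stuck analyzing genuinely quartic and sextic exponential sums modulo $q$, and the oscillation you can extract from them is much poorer than from the three quadratics.

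A second, smaller issue: your claimed bilinear mechanism --- factor $q=q_1q_2$ with $q_1$ smooth, run a sum-product expansion estimate on the $q_1$-slice and Weil on the $q_2$-slice --- is not what the paper does and is not needed once the reduction to degree-$2$ forms is in hand. The paper's error analysis is the elementary one inherited from \cite{BK15}: Poisson summation in $(c,d)$, the complete-sum vanishing $S_1(q;\omega)=0$ for $q>1$, the Ramanujan-type bound for $S_4$ (Lemma 4, extended here to $f=x,y$), and the crude bound $|S_5|\le 1$. The gain over \cite{HK15} comes from the extra $\omega\in\Omega_Y$ variable (so the sum really is bilinear) together with the indicator-decomposition trick, not from any refined Kloosterman or sum-product input. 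Finally, the heuristic inequality you write, $2\delta_\Gamma\cdot\deg f/\beta(\delta_\Gamma)<R+o(1)$, is not the DHR criterion the paper actually uses; the sieve dimension $\kappa$ is the number of irreducible factors of $f$ in $(c,d)$ (so $\kappa=1,4,5$), not the degree, and the precise threshold comes from minimizing the explicit $m_{\alpha,\kappa}(\zeta)$ functional with $\alpha$ equal to $\tfrac{5}{16}\cdot\tfrac{2}{D}$.
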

\begin{myrmk}
The value of $R_0$ is not improved for $f=z$ as compared to Theorem 2, but the allowable value of $\delta_0$ has. For the area and product of coordinates, the values of $R_0$ have improved from Theorem 3 along with their respective values of $\delta_0$.
\end{myrmk}

\subsection*{Notation}
We will use the following standard notation throughout. Let $e_t(x)=e^{2\pi i x/t}$. We denote the cardinality of the set $X$ by $|X|$. The letter $p$ will always be a prime. The congruence class $a \mod q$ will also be denoted by $a (q)$ interchangeably. The indicator function on the set $A$ is denoted by $1_A$. We write $f \ll g$ if $\exists C : f \leq C g$. If $g$ depends on epsilon, $f \ll_\epsilon g$ means $\exists C(\epsilon) : f\leq C(\epsilon) g_\epsilon$. We will often simply write $\ll$ for $\ll_\epsilon$. It is understood that the particular constant C is allowed to change line to line.  The greatest common divisor of $m$ and $n$ is written $(m, n)$ and their least common multiple is $[m, n]$. The transpose of a matrix $\g$ is written $\g^t$. 
\subsection*{Acknowledgements}
The author is extremely grateful to Alex Kontorovich for his generous tutelage.
\section{Background}
\subsection*{Strong Approximation}
\begin{myprop}[Strong Approximation, \cite{MVW84}]
Let $\G \leq SL(2, \Z)$ be Zariski dense, and let $\pi_q$ be the projection $SL(2, \Z) \to SL(2, \Z/q\Z)$. Then there exists a number $\mathcal{B}$ with the following property: whenever $q$ is squarefree and of the form $q=dp_1 \cdots p_k$ with $d|\mathcal{B}$ and $(p_i, \mathcal{B})=1$, one has 
\begin{align*}
\pi_q(\G) \cong \pi_{d}(\G) \times SL(2, \Z/p_1\Z) \times \cdots \times SL(2, \Z/p_k\Z) \\
\end{align*}
\end{myprop}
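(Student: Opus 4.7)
The plan is to separate the problem into two stages: first, show that $\pi_p(\Gamma) = SL(2, \mathbb{F}_p)$ for all primes $p$ outside a finite exceptional set; second, combine these single-prime surjections into the claimed product decomposition for squarefree $q$ via the Chinese Remainder Theorem and Goursat's lemma.

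For the first stage, I would invoke Dickson's classification of subgroups of $PSL(2, \mathbb{F}_p)$: every proper subgroup is contained in a Borel, is dihedral, or is one of the exceptional groups $A_4$, $S_4$, $A_5$. Suppose for contradiction that $\pi_p(\Gamma)$ is proper for infinitely many $p$; by pigeonhole, some fixed type occurs for an infinite set $S$ of primes. Each such proper subgroup lifts to a proper algebraic subgroup $H \subset SL_2$ (a Borel, a normalizer of a torus, or a finite subgroup), and for every $p \in S$ a fixed finite generating set of $\Gamma$ reduces into $H(\mathbb{F}_p)$. A standard density argument — the defining polynomials of $H$ vanish modulo infinitely many $p$ on the generators, hence vanish identically — forces the Zariski closure of $\Gamma$ to sit inside $H$, contradicting Zariski density. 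Define $\mathcal{B}$ as the product of the (finite) set of exceptional primes, enlarged to include $2$ and $3$, where $PSL_2$ is not simple.

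For the second stage, fix a squarefree $q = d p_1 \cdots p_k$ with $d \mid \mathcal{B}$ and $(p_i, \mathcal{B}) = 1$. CRT gives $SL(2, \mathbb{Z}/q\mathbb{Z}) \cong SL(2, \mathbb{Z}/d\mathbb{Z}) \times \prod_i SL(2, \mathbb{F}_{p_i})$, and the image $\pi_q(\Gamma)$ surjects onto each factor by stage one and the definition of $\pi_d(\Gamma)$. I then apply Goursat's lemma iteratively: for distinct primes $p_i, p_j \geq 5$, the simple quotients $PSL(2, \mathbb{F}_{p_i})$ and $PSL(2, \mathbb{F}_{p_j})$ have distinct orders, hence share no nontrivial common quotient, so any subgroup surjecting onto both $SL(2, \mathbb{F}_{p_i})$ and $SL(2, \mathbb{F}_{p_j})$ must be the full direct product. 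Iterating handles all the $p_i$'s; one final application (absorbing into $\mathcal{B}$ any primes at which $\pi_d(\Gamma)$ could share a quotient with some $PSL(2, \mathbb{F}_{p_i})$) yields the stated decomposition.

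The main obstacle is stage one: translating Zariski density of $\Gamma$ into mod-$p$ surjectivity uniformly outside a finite set requires careful case analysis in Dickson's classification, and the resulting argument is non-constructive — it produces $\mathcal{B}$ without an explicit bound, with effective control depending on concrete data about the generators of $\Gamma$. Fortunately, the affine sieve application in this paper requires only the existence of $\mathcal{B}$, so this mild non-effectivity does not impede downstream use.
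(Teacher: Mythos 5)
The paper itself does not prove this proposition; it cites it directly from Matthews--Vaserstein--Weisfeiler \cite{MVW84}, so there is no in-paper argument to compare against. Your two-stage plan (mod-$p$ surjectivity outside a finite set via Dickson, then CRT plus Goursat to glue) is the standard route, and stage two is essentially sound: for $p\geq 5$ the group $SL(2,\mathbb{F}_p)$ is perfect with unique nontrivial proper quotient $PSL(2,\mathbb{F}_p)$, these simple quotients are pairwise non-isomorphic across distinct primes, and any $PSL(2,\mathbb{F}_p)$ occurring as a composition factor of $\pi_d(\Gamma)$ with $d\mid\mathcal{B}$ automatically has $p\mid\mathcal{B}$, so the extra enlargement of $\mathcal{B}$ you worry about is in fact unnecessary.

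The genuine gap is in stage one, in the sentence ``the defining polynomials of $H$ vanish modulo infinitely many $p$ on the generators, hence vanish identically.'' The subgroup $H$ is not a fixed algebraic subgroup of $SL_2$: a Borel, or the normalizer of a (split or nonsplit) torus, inside $SL(2,\mathbb{F}_p)$ is determined only up to conjugacy, with the conjugating element varying with $p$. So the generators do not satisfy one fixed system of polynomial equations across the infinitely many primes, and ``hence vanish identically'' does not follow as written. The repair is to replace ``lies in some conjugate of $H$'' by a conjugation-invariant condition on the generators defined over $\mathbb{Z}$. All Dickson types other than $A_5$ (Borel, dihedral, $A_4$, $S_4$) are solvable of derived length at most $3$; so if $\pi_p(\Gamma)$ had one of these forms for infinitely many $p$, a fixed nontrivial third iterated commutator $w\in\Gamma$ (which exists since $\Gamma$ contains a free subgroup by Zariski density and the Tits alternative) would reduce to the identity modulo infinitely many primes, forcing $w=I$, a contradiction. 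For the $A_5$ type the image has exponent dividing a fixed bound $N$, so an infinite-order element $g\in\Gamma$ would satisfy $g^N\equiv I$ modulo infinitely many $p$, hence $g^N=I$, again a contradiction. This conjugation-invariant bookkeeping is what the density step in stage one actually requires, and as written your argument does not supply it.
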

We call this $\mathcal{B} = \mathcal{B}(\G)$ the $bad$ $modulus$ and will use this symbol throughout to denote this, making it clear from context which group it is derived from. The key property fundamental our sieve is that if $q$ is squarefree with $(q, \mathcal{B}) = 1$, then $\pi: \G \to SL(2, \Z/q\Z)$ is surjective. If we let $S$ be a finite set of generators for $\G$, this tells us that the Cayley graphs Cay($\pi_q(\G), \pi_q(S)$) are connected for $q$ squarefree, $(q, \mathcal{B}) =1$. Super approximation tells us that these in fact expand.

\subsection*{Super Approximation}
We consider the action of a subgroup $\Gamma \leq SL(2, \mathbb{Z})$ on $\mathbb{H}$ by linear fractional transformations. The Laplace-Beltrami operator $\Delta = y^2(\frac{\partial^2}{\partial^2 x} + \frac{\partial^2}{\partial^2y})$ acting on $L^2(\G \backslash \mathbb{H})$ then has spectrum $Spec(\G \backslash \mathbb{H})$. Lax-Phillips \cite{LP82} proved that this decomposes into a purely continuous part above $1/4$ and a finite discrete component on (0, 1/4), say  
\begin{center}
$0 < \lambda_0 \leq \lambda_1 \leq \cdots \leq \lambda_{max} < 1/4$
\end{center}
Patterson and Sullivan (\cite{P76}, \cite{S84}) proved that $\lambda_0 = \delta(1-\delta)$, where $\delta$ is the critical exponent for $\Gamma$. Let $\Gamma(q)$ be the kernel of $\pi_q: \G \to SL(2, \Z/q\Z)$ (the principal congruence group mod $q$), and now any eigenfunction $f \in L^2(\G \backslash \mathbb{H})$ lifts to an eigenfunction in $L^2(\G(q) \backslash \mathbb{H})$, giving us the reverse inclusion $Spec(\G(q)\backslash \mathbb{H}) \supset Spec(\G \backslash \mathbb{H})$. Let $Spec(\G(q) \backslash\mathbb{H})$ be
\begin{center}
$0 < \lambda_0(q) \leq \lambda_1(q) \leq \cdots \leq \lambda_{max}(q) < 1/4$
\end{center}
 We call the new elements $Spec(\G(q) \backslash \mathbb{H})^{new}$. We always have $\lambda_0(q)=\lambda_0$ and $\lambda_1(q)>\lambda_0$, but a priori for different $q$ these $\lambda_1(q)$ may approach $\lambda_0$. If there exists $\theta \in (0,\delta)$ such that $Spec(\G(q) \backslash \mathbb{H})^{new} \subset [\theta(1-\theta), 1/4)$ for all $(q, \mathcal{B})=1$, then we call $(\delta(1-\delta), \theta(1-\theta))$ the $spectral$ $gap$. The key result of Bourgain-Gamburd-Sarnak (\cite{BGS10} \cite{BGS11}) we will use throughout is that if $\delta> 1/2$, there is always some spectral gap, and moreover, by Gamburd \cite{G02} if $\delta>5/6$, we may take $\theta = 5/6$.
\subsection*{Counting in Thin Groups} 

\begin{mythm}[Bourgain-Kontorovich \cite{BK15}]
For each $T>0$, there exists a smoothed indicator function $\Upsilon_T: \Gamma \to [0, 1]$ satisfying 
\begin{align*}
\Upsilon_T(\g) = \begin{cases} 1 &\mbox{ if } || \g|| < \frac{9}{10}T \\
0 &\mbox{ if } ||\g||> \frac{11}{10}T
\end{cases}
\end{align*}
Let $\G$ have exponent $\delta>1/2$ and spectral gap $\theta< \delta$. Then we have
\begin{align*}
\sum_{\g \in \G} \Upsilon_T(\g) \sim C \cdot T^{2\delta}
\end{align*} as $T \to \infty$. Moreover, for any $\gamma_0 \in \G$, any q squarefree with $(q, \mathcal{B})=1$, and any subgroup $\tilde\G(q)$ satisfying $\Gamma(q) \leq \tilde\Gamma(q) \leq \G$, we have 
\begin{align*}
\sum_{\g \in \tilde\G(q)} \Upsilon_T(\g\g_0) = \frac{1}{[\G: \tilde\G(q)]} \sum_{\g \in \G} \Upsilon_T(\g)+ O(T^{2\theta})
\end{align*} The implied constant does not depend on $\g_0$ or $q$.
\end{mythm}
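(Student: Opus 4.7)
The plan is to convert the norm-count into an automorphic lattice-point count on $\mathbb{H}$ and spectrally expand the resulting kernel. Using the spin cover $\iota: SL(2,\R) \to SO_F^\circ(\R)$ together with the $KAK$ decomposition of $SL(2,\R)$: writing $\g = k_1 a_t k_2$ with $k_i \in SO(2)$ and $a_t = \mathrm{diag}(e^{t/2}, e^{-t/2})$, one has $\|\iota(\g)\| \asymp e^t$. After smoothing the sharp cutoff $9T/10 < \|\cdot\| < 11T/10$ by a standard bump in the $t$-variable, one realizes $\Upsilon_T(\g) = \upsilon_T(d(i, \g \cdot i))$, where $d$ is hyperbolic distance on $\mathbb{H}$ and $\upsilon_T$ is supported in a thin annulus around $r_T = 2\log T$. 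The sum $\sum_{\g \in \tilde\G(q)} \Upsilon_T(\g \g_0)$ then becomes the automorphic kernel $K_T(i, \g_0^{-1} \cdot i)$, where $K_T(z, w) = \sum_{\g \in \tilde\G(q)} \upsilon_T(d(z, \g w))$ lives on $\tilde\G(q) \backslash \mathbb{H} \times \tilde\G(q) \backslash \mathbb{H}$.

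Next, I apply the Selberg--Harish-Chandra spectral decomposition
\[
K_T(z, w) = \sum_j \hat\upsilon_T(s_j)\, \varphi_j(z) \overline{\varphi_j(w)} + (\text{continuous spectrum contribution}),
\]
where $\lambda_j = s_j(1-s_j)$ runs over the discrete spectrum, $\{\varphi_j\}$ is an orthonormal basis of Maass forms on $\tilde\G(q) \backslash \mathbb{H}$, and $\hat\upsilon_T$ is the Selberg transform. The $j=0$ term uses the base eigenfunction $\varphi_0$ of eigenvalue $\delta(1-\delta)$, which by Patterson--Sullivan is the unique ground state and descends from $\G \backslash \mathbb{H}$. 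A stationary-phase computation gives $\hat\upsilon_T(\delta) \asymp T^{2\delta - 1}$, and once combined with the Plancherel normalization $\|\varphi_0\|_2^2 \asymp [\G : \tilde\G(q)]$ on the cover, one obtains the main term $C \cdot T^{2\delta} / [\G : \tilde\G(q)]$. The pointwise evaluation at $(i, \g_0^{-1} \cdot i)$ does not affect this leading-order contribution since $\varphi_0$ is positive and essentially constant on any fixed compact set.

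For the error, by the spectral gap hypothesis each new Maass form has $s_j \leq \theta$, contributing $\hat\upsilon_T(s_j) \ll T^{2\theta}$; old forms are inherited from $\G \backslash \mathbb{H}$ and form a fixed finite collection, each contributing similarly. The continuous spectrum sits above $1/4$ and contributes $O(T)$, absorbed in $O(T^{2\theta})$ once $\theta \geq 1/2$ (which holds in all relevant regimes, as $\delta > 1/2$ by hypothesis and $\theta$ may be enlarged harmlessly if necessary). Summing the pointwise contributions yields the claimed error $O(T^{2\theta})$, and the uniformity in $\g_0$ is automatic since the bound is pointwise.

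The main obstacle is the uniformity of the error in $q$. The volume of $\tilde\G(q) \backslash \mathbb{H}$ grows linearly in $[\G : \tilde\G(q)]$, and a naive application of pointwise eigenfunction bounds would produce an error of order $[\G : \tilde\G(q)] \cdot T^{2\theta}$, which dwarfs the main term. The necessary cancellation comes from the super-approximation theorem of Bourgain--Gamburd--Sarnak, which guarantees that the exceptional spectral mass is spread uniformly across the congruence tower; carefully tracking old versus new contributions, using the Plancherel normalization to absorb the index factor, and treating the continuous spectrum via trace-formula or amplification arguments, one obtains a $q$-uniform error $O(T^{2\theta})$. This uniformity is the technical heart of the Bourgain--Kontorovich argument and propagates throughout the affine sieve estimates later in the paper.
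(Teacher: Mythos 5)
The paper does not prove this theorem --- it is quoted from Bourgain--Kontorovich \cite{BK15} (and rests on the infinite-volume counting machinery developed earlier by Bourgain--Kontorovich--Sarnak, Kontorovich, and Kontorovich--Oh) --- so there is no internal proof to compare against. Your spectral-decomposition sketch follows the same broad strategy as those sources: convert the Archimedean norm count into an automorphic kernel via the $KAK$ decomposition, spectrally expand, isolate the Patterson--Sullivan base eigenfunction for the main term, and bound the remainder using the spectral gap.

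Two points in your sketch need correction. First, for a thin geometrically finite group (infinite covolume, no parabolics) there is no Eisenstein-series decomposition of the continuous spectrum. Lax--Phillips theory gives a finite discrete spectrum below $1/4$ and an absolutely continuous spectrum above $1/4$; the latter's contribution is controlled via the abstract spectral theorem together with the decay of the Selberg/Harish--Chandra transform of the smooth bump, not by comparison with explicit unitary Eisenstein integrals. Your order-of-magnitude estimate is fine, but the mechanism you invoke belongs to the cofinite setting. Second, and more substantively, your account of $q$-uniformity conflates two separate inputs. Super-approximation (Bourgain--Gamburd--Sarnak) produces the uniform spectral gap $\theta$ in the first place: it guarantees that the new eigenvalues of $\tilde\G(q)\backslash\mathbb{H}$ all lie in $[\theta(1-\theta),1/4)$ with $\theta$ independent of $q$. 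But once the gap exists, the $q$-uniform $O(T^{2\theta})$ error bound is not a further consequence of super-approximation and has nothing to do with ``spreading spectral mass uniformly across the congruence tower.'' It comes from a pretrace (local Weyl-law) argument: the geometric side of the pretrace inequality, a sum over $\g\in\tilde\G(q)$ of a positive point-pair kernel evaluated at a fixed point, is termwise dominated by the corresponding $q$-independent sum over all of $\G$ precisely because $\tilde\G(q)\subset\G$. That dominating quantity bounds the pointwise spectral mass of the new forms uniformly in $q$. The index $[\G:\tilde\G(q)]$ never multiplies the error because the kernel is evaluated at a single pair of points rather than integrated over the growing fundamental domain. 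This pretrace step, not super-approximation, is what delivers the asserted uniformity in both $q$ and $\g_0$, and you should make it explicit rather than appealing to an expansion heuristic.
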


\section{Sieving Preliminaries}
Let $\mathcal{A}=\{a(n)\}$ be a finite sequence of non-negative numbers supported on $[-N, N]$.  If we seek to prove that there are primes or almost primes in $\mathcal{A}$, sieve theory reduces the problem to estimating $|\mathcal{A}_\q| = \displaystyle\sum_{n \equiv 0 (\q)} a(n)$ for all $\q$ squarefree. We would like to prove that $|\mathcal{A}_\q| = \beta(\q)\chi + r(\q)$,  where $\beta(\q)$ is a multiplicative non-negative function (the `local density'), $\chi$ is the sum of the full sequence (the `mass'), and $ r(\q)$ is the error term. We call $\alpha$ an $exponent$ $of$ $distribution$ if for all $\epsilon>0$, we have
\begin{align*}
\sum_{\q < N^\alpha} |r(\q)| \ll_\epsilon \chi^{1-\epsilon} \\
\end{align*}
The linear sieve of Greaves \cite{G86} and the higher dimensional sieve of Diamond, Halberstam, and Richert (\cite{DHR96}, \cite{DH97}) allows us to convert this $\alpha$ into an upper bound on $R_0$, and so the goal is to raise $\alpha$ as high as possible.
\subsection*{Setting up the dispersion method} Throughout the paper we will use the Archimedean norm $|| \begin{pmatrix} a & b \\ c& d \end{pmatrix}||^2 = a^2 + b^2 + c^2 + d^2$. A key observation given later demonstrates that to study our three chosen forms, $z, \frac{xy}{12}, $ and $ \frac{xyz}{60}$, it is enough to study merely $x, y, $ and $z$. Therefore let $f$ be one of our quadratic forms of interest, $x, y,$ and $z$. The simplest mechanism of setting up this sieve is to let $a_T(n)=\displaystyle\sum_{\g \in \G \atop{||\g|| < T}} 1_{\{f(x_0 \cdot \g) =n\}}$ for any $T>0$. To create cancellation, we define two multiplicative functions, supported on squarefree numbers and defined as follows on primes:
\begin{align*}
& \rho(p) = \frac{2p-1}{p^2} \\
& \Xi(p ; n) = 1_{\{n \equiv 0 (p) \}} - \rho(p) \\
\end{align*}
Let $D_g=D$ be the degree of our polynomial $g \in \{z, \frac{1}{12}xy, \frac{1}{60}xyz\}$ in $(c, d)$ - so $D \in \{2, 4, 6\}$. Recalling that $N$ is the size of the largest element in our sequence, we have $N = \displaystyle\max_{\g \in \G \atop{||\g||<T}}||f(x_0 \cdot \gamma)|| \sim T^D$. Now, in analyzing $|\mathcal{A}_\q|$, we can rearrange things as follows:
\begin{align*}
|\mathcal{A}_\q| &= \sum_{n \equiv 0 (\q)} a_T(n) = \sum_{\g \in \G \atop {||\g||< T}} 1_{\{f(x_0 \cdot \g) \equiv 0 (\q)\}}\\
& = \sum_{\g \in \G \atop {||\g||< T}} \prod_{p| \q} 1_{\{f(x_0 \cdot \g)\equiv 0 (p)\}} = \sum_{\g \in \G \atop {||\g||< T}} \prod_{p|q} (\Xi(p; f(x_0 \cdot \g)) + \rho(p)) \\
&=\sum_{q | \q}\rho(\frac{\q}{q}) \sum_{\g \in \G \atop {||\g||< T}}  \Xi(q; f(x_0 \cdot \g))
\end{align*}
Introducing this oscillation is an adaptation of Linnik's dispersion method taken from \cite{BK15}. In this paper, Bourgain and Kontorovich decompose the sum along the subgroups $\G_{x_0}(q) = \{ \g \in \G : x_0 \equiv x_0 \cdot \g \mod q \}$ as follows, making use of Theorem 5: 
\begin{align}
|\mathcal{A}_\q| &=\displaystyle\sum_{q | \q} \rho(\frac{\q}{q}) \sum_{\g \in \tilde\Gamma(q)} \sum_{\g_0 \in \G_{x_0}(q) \backslash \G} \Xi(q; f(x_0 \cdot \g \g_0)) 1_{||\g\g_0||<T} \\
& =\displaystyle\sum_{q | \q} \rho(\frac{\q}{q})\sum_{\g_0 \in \G_{x_0}(q) \backslash \G} \Xi(q; f(x_0 \cdot \g_0)) [\frac{C\cdot T^{2\delta} }{[\G: \G_{x_0}(q)]} + O(T^{2\theta})] = \mathcal{M}_\q + r(\q)
\end{align}
We thus see that lowering the index of the subgroup we decompose along is critical to lowering the bound on the error term coming from the spectral gap. 
In \cite{HK15}, Hong and Kontorovich achieve this by realizing that rather than requiring $x_0 \mod q$ to be preserved by the right action of $\gamma$, 
all they need is $f(x_0 \cdot \g_0) \equiv 0 \mod q \iff f(x_0 \cdot \g \g_0) \equiv 0 \mod q$. Accordingly, for any $x \in \Z^2$, let  
\begin{align*}
\Gamma_{<x>}(q) = \{\g \in \G: x \cdot \g \equiv a x \mod q, a \in (\Z/q\Z)^\times \}
\end{align*}
Now, rewriting (1), (2) with $\G_{<x_0>}(q)$ replacing $\G_{x_0}(q)$ greatly reduces the error terms $r(\q)$, and enables us to achieve a level of distribution $\alpha$ almost as large as $\frac{\delta - \theta}{D}$, where $D$ is the degree of the polynomial $f$ (in the hypotenuses case $D=2$). Hong and Kontorovich proved the best known saturation numbers for the area and the product of all three coordinates as in Theorem 2 using this method, but the machine of Bourgain-Kontorovich from \cite{BK15} produces a far stronger level of distribution in the hypotenuses case. As current technology achieves at best $\delta-\theta= \frac{1}{6}-\epsilon$, for the hypotenuse case of $f=z$ this gives $\alpha = \frac{1}{12}-\epsilon$, while Bourgain and Kontorovich obtain any $\alpha= \frac{7}{24}-\epsilon$ for sufficiently large $\delta$ \cite{BK15}. This results in an improvement of $R_0$ from 7 to 4. 
\\ \indent To achieve this improvement, Bourgain and Kontorovich redefine $a_T(n)$ to introduce another variable to play with. Specifically, let $x+y=1$, and let $Y = T^y$, $X = T^x$. Let $\Omega_Y = B_Y \cap \Gamma = \{ \g \in \G : ||\g|| < Y\}$, and now define 
\begin{align*} a_T(n) =\displaystyle \sum_{\g \in \G}\sum_{\w \in \Omega_Y} \Upsilon_X(\g) 1_{\{{f(x_0 \cdot \g \w)=n}\}}\end{align*} where $\Upsilon_X$ comes from Theorem 5. We are now summing a smoothed version of the multi-set $\{ \g \w: ||\g||<X, ||\w||<Y\}$ rather than a smoothed ball of size $T$. While these are quite similar, the extra variable we have introduced will prove crucial in bounding the error term. Following Bourgain and Kontorovich, we will cut the sum on divisors at some height $Q_0$, a small power of T, and use different estimates in the two regimes. Now, combining these we obtain  
\begin{align*}
 |\mathcal{A}_\q| & = \sum_{n \equiv 0 (\q)} a_T(n) = \sum_{n} a_T(n)1_{\{n\equiv 0 (q)\}} = \sum_n a_T(n) \prod_{p| \q} (\Xi(p; n) + \rho(p))\\ &=\displaystyle\sum_{q| \q} \sum_{n} a_T(n) \Xi(q; n) \rho(\frac{\q}{q}) = \sum_{q| \q} \rho(\frac{\q}{q}) \sum_{\gamma \in \Gamma} \Upsilon_X(\gamma)\sum_{\omega \in \Omega_Y}\Xi(q; f(x_0 \cdot \gamma\omega))  \\ &=  \sum_{q| \q \atop {q<Q_0}} \rho(\frac{\q}{q}) \sum_{\gamma \in \Gamma} \Upsilon_X(\gamma)\sum_{\omega \in \Omega_Y}\Xi(q; f(x_0 \cdot \gamma\omega)) +  \sum_{q| \q \atop{ q\geq Q_0}} \rho(\frac{\q}{q}) \sum_{\gamma \in \Gamma} \Upsilon_X(\gamma)\sum_{\omega \in \Omega_Y}\Xi(q; f(x_0 \cdot \gamma\omega))\\
&= M_\q + r(\q)
 \end{align*}
 The first term $M_\q$ is our main term.  Let $\text{sgn}(r(\q)) = \zeta(\q)$. To bound 
 \begin{align*}\displaystyle\mathcal{E} = \sum_{\q<Q} |r(\q)| = \sum_{\q<Q} \zeta(\q) \sum_{q| \q \atop{q>Q_0}}\rho(\frac{\q}{q})\sum_{\g \in \G} \Upsilon_X(\g)\sum_{\w \in \Omega_Y} \Xi(q; f(x_0 \cdot \g\w)) \\
 \end{align*} apply Cauchy-Schwartz in the $\g$ variable and change the order of summation. This gives: 
 \begin{align*}
 \mathcal{E}^2 &\ll X^{2\delta} \sum_{Q_0 \leq q, q' \leq Q} \sum_{\q \equiv 0 (q_1), \q<Q \atop{\q' \equiv 0 (q_1'), \q'<Q} } \zeta(\q)\zeta(\q')\rho(\frac{\q}{q})\rho(\frac{\q'}{q'})\sum_{\g, \g' \in \G} \sum_{\w, \w' \in \Omega_Y}\Xi(q; f(x_0 \cdot \g\w))\Xi(q'; x_0 \cdot \g'\w') \\
 \end{align*}
This sum turns out to be tractable by using Poisson summation and investigating quadratic forms. 

\section{Main Term Analysis}
Throughout we take $q$ to be squarefree, and always assume that $2|\mathcal{B}$ (it does not harm us to throw more primes into $\mathcal{B}$). Let $\eta(q)$ be a multiplicative function defined on primes by $\eta(p) = p+1$, and let $q=p_1 \cdots p_l$ be its prime factorization. We begin with a lemma:
\begin{mylem}
$\G_{<x_0>}(q) \backslash \Gamma \cong \prod_{p|q}(\G_{<x_0>}(p) \backslash \Gamma)$. Additionally, $\{(0, 1)\} \cup \{(1, d): d \in \Z/p\Z\}$ form a complete set of coset representatives for $\G_{<x_0>}(p) \backslash \Gamma$. Therefore, the index $[\G_{<x_0>}(q): \Gamma] = \eta(q)$. 
\end{mylem}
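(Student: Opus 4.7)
The plan is to reduce to the prime case via strong approximation and then identify the coset space $\G_{<x_0>}(p)\backslash\G$ with $\mathbb{P}^1(\mathbb{F}_p)$. Note first that $\G_{<x_0>}(q)$ is cut out by a congruence modulo $q$, so by CRT it equals the intersection $\bigcap_{p\mid q}\G_{<x_0>}(p)$. Hence the natural diagonal map
\[
\G\ \longrightarrow\ \prod_{p\mid q}\bigl(\G_{<x_0>}(p)\backslash \G\bigr)
\]
has kernel exactly $\G_{<x_0>}(q)$, and the claimed product decomposition reduces to showing that this map is surjective.

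For surjectivity, I would invoke Proposition 1: since $q$ is squarefree and (because of the assumption $2\mid \mathcal{B}$ and the set-up) $(q,\mathcal{B})=1$, one has $\pi_q(\G)\cong \prod_{p\mid q}SL(2,\Z/p\Z)$. Prescribing independent target cosets at each prime amounts to prescribing the image $x_0\cdot \g\bmod p$ up to $(\Z/p\Z)^\times$-scaling at each $p\mid q$, and this can be realized inside $\prod_{p\mid q}SL(2,\Z/p\Z)$; pulling back via strong approximation gives a genuine $\g\in\G$.

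It then suffices to prove the prime case. For $\g=\begin{pmatrix}a&b\\c&d\end{pmatrix}\in\G$ one has $x_0\cdot\g=(c,d)$, so $\g\in\G_{<x_0>}(p)$ iff $(c,d)\equiv\lambda(0,1)\bmod p$ for some $\lambda\in(\Z/p\Z)^\times$, i.e.\ iff $c\equiv 0$ and $d\not\equiv 0\bmod p$. Because $\det\g=1$, the vector $(c,d)\bmod p$ is always primitive, so it determines a well-defined point of $\mathbb{P}^1(\mathbb{F}_p)$, and two elements of $\G$ share a right coset mod $\G_{<x_0>}(p)$ iff they induce the same point. Strong approximation again gives $\pi_p(\G)=SL(2,\mathbb{F}_p)$, which acts transitively on primitive vectors of $\mathbb{F}_p^2$; hence the map $\G\to\mathbb{P}^1(\mathbb{F}_p)$, $\g\mapsto[x_0\cdot\g]$, is surjective, and therefore descends to a bijection $\G_{<x_0>}(p)\backslash\G\cong\mathbb{P}^1(\mathbb{F}_p)$.

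The standard enumeration $\mathbb{P}^1(\mathbb{F}_p)=\{[0{:}1]\}\cup\{[1{:}d]:d\in\Z/p\Z\}$ then gives the stated coset representatives and $[\G:\G_{<x_0>}(p)]=p+1$. Multiplying across $p\mid q$ via the first step yields $[\G:\G_{<x_0>}(q)]=\prod_{p\mid q}(p+1)=\eta(q)$, as desired. There is no serious obstacle; the only point requiring care is that the surjectivity in both the product step and the prime step is genuinely coming from strong approximation (Proposition 1) rather than from naive CRT, which would fail for a thin $\G$.
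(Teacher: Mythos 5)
Your proof is correct and takes essentially the same route as the paper: strong approximation (Proposition~1) supplies the surjectivity that naive CRT would fail to give for thin $\G$, and the product decomposition is deduced from the prime-by-prime data. A minor presentational difference: the paper argues via the image isomorphism $\G_{<x_0>}(q)\cong\prod_{p\mid q}\G_{<x_0>}(p)$ inside $\prod_{p\mid q}SL(2,\Z/p\Z)$, whereas you phrase the same fact as the subgroup identity $\G_{<x_0>}(q)=\bigcap_{p\mid q}\G_{<x_0>}(p)$; these are equivalent. You are also more explicit than the paper about the second and third claims, identifying $\G_{<x_0>}(p)\backslash\G$ with $\mathbb{P}^1(\mathbb{F}_p)$ via $\g\mapsto[x_0\cdot\g]$ and reading off the coset representatives and the index $p+1$ from there, which is a clean way to finish. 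One small terminological nit: the target $\prod_{p\mid q}(\G_{<x_0>}(p)\backslash\G)$ is a set of coset tuples rather than a group, so speaking of the ``kernel'' of the diagonal map is informal; what you mean (and correctly use) is that the map is constant on right $\G_{<x_0>}(q)$-cosets and descends to an injection on $\G_{<x_0>}(q)\backslash\G$.
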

\begin{proof}
$\Gamma$ surjects onto $SL(2, \Z/q\Z)$, and as $\G_{<x_0>}(q)$ contains the kernel of this map, $\G(q)$, we have $\G_{<x_0>}(q) \backslash \G \cong \G_{<x_0>}(q) \backslash SL(2, \Z/q\Z)$. By the Chinese Remainder Theorem for $SL(2, \Z/q\Z)$ we have $SL(2, \Z/q\Z) \cong \prod_{p|q} SL(2, \Z/p\Z)$, where the isomorphism is simply projection to each coordinate. There is a natural inclusion map $ \iota: \G_{<x_0>}(q) \to \prod_{p|q} \G_{<x_0>}(p)$. Let $\gamma \in i^{-1}(\g_1, \cdots, \g_l)$ with $\g_k \in \G_{<x_0>}(p_k)$ $\forall$ $1\leq k \leq l$, so for each $p_k| q$, we have $x_0 \cdot \gamma \equiv x_0 \cdot \gamma_k \equiv a_k x_0 \mod p_k$ for some $a_k\in (\Z/p_k \Z)^\times$. Thus if $a$ is the image of $(a_1, \cdots, a_l)$ under the canonical isomorphism $\prod_{p|q}(\Z/p\Z)^\times \cong (\Z/q\Z)^\times$, we see that $x_0 \cdot \gamma  \equiv ax_0 \mod q$, and so in fact $\G_{<x_0>}(q) \cong \prod_{p|q} \G_{<x_0>}(p)$ under the inclusion map. Therefore our stated identity $\G_{<x_0>}(q) \backslash \G \cong \prod_{p|q}(\G_{<x_0>}(p) \backslash \G)$ follows directly from the Chinese Remainder Theorems for $\Z$ and $SL(2, \Z)$. 
\end{proof}
\begin{myrmk} These three polynomials are in fact equivalent on primes $p \equiv 1(4)$. Indeed, fix $\epsilon_p \in \Z$ with $\epsilon_p^2 \equiv -1 (p)$, and let $\nu_p =\begin{pmatrix}1 & 1 \\ \epsilon_p & -\epsilon_p \end{pmatrix}$. Let $\nu = \begin{pmatrix} 1 & 1 \\ -1 & 1 \end{pmatrix}$. Then observe that $x( x_0 \cdot \g\nu) = y(x_0 \cdot \g)$ and $z(x_0 \cdot \g\nu_p) \equiv z(x_0 \cdot \g) \mod p$. We have $x_0 \cdot \G = (x_0 \cdot \nu^{-1})\cdot (\nu \G \nu^{-1})\nu = b \cdot \G' \nu$ for a conjugate group $\Gamma'$ which satisfies all of the same properties that $\G$ does. These lie in a copy of $SL_2$ rather than $SL_2(\Z)$ within $SL_2(\mathbb{Q})$, but all of our arguments hold in this broader context. Doing this as well for $\nu_p$ in place of $\nu$, and observing that as our counts are local varying the conjugate $\G'$ with $p$ doesn't matter, we see that estimating $|\mathcal{A}_\q|$ for any $\q$ squarefree with all prime factors $\equiv 1(4)$ is equivalent for any of $x, y, z$. Despite our awareness of this, we will do all of the computations directly in all three cases as it's not too difficult. 
\end{myrmk}

\begin{mythm} Let $\beta_f(p) = \begin{cases} \frac{2}{p+1} & \text{if } p\equiv 1(4), (p,\mathcal{B})=1, f=z \\ \frac{2}{p+1} & \text{ if } (p, \mathfrak{B})=1, f = x, y \\ 0 & \text{else} \end{cases}$ be a multiplicative function defined as such on primes, and let $\chi = |\mathcal{A}| = |\Omega_Y| \displaystyle\sum_{\gamma \in \Gamma} \Upsilon_X(\gamma)$. Then we can write $|\mathcal{M}_{\q}| = \beta(q) \chi + r^{(1)}(q) + r^{(2)}(q)$ where
\begin{align*}
\displaystyle\sum_{q<\mathcal{Q}} |r^{(1)}(q)| < \frac{\chi\mathcal{Q}^\epsilon Q_0 }{X^{2(\delta - \theta)}}
\end{align*}
and
\begin{align*}
\displaystyle\sum_{q<\mathcal{Q}} |r^{(2)}(q)| < \frac{\chi\mathcal{Q}^\epsilon}{Q_0}
\end{align*}
\end{mythm}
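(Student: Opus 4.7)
The plan is to combine a coset decomposition of $\Gamma$ along $\Gamma_{<x_0>}(q)$ with the effective counting in Theorem 5, and then identify the local density $\beta_f(\q)$ through a direct computation at each prime.

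First, for each $q|\q$ with $q<Q_0$, I write $\gamma = h\gamma_0$ with $h \in \Gamma_{<x_0>}(q)$ and $\gamma_0$ ranging over the $\eta(q)$ coset representatives furnished by Lemma 1. Because $x_0 \cdot h \equiv a(h) x_0 \pmod q$ for some $a(h) \in (\Z/q\Z)^\times$, and $f$ is homogeneous of degree $D$, we obtain $\Xi(q; f(x_0 \cdot h\gamma_0\omega)) = \Xi(q; f(x_0 \cdot \gamma_0\omega))$, independent of $h$. This pulls the $\Upsilon_X$-count outside the $\Xi$ factor, and Theorem 5 replaces $\sum_{h \in \Gamma_{<x_0>}(q)}\Upsilon_X(h\gamma_0)$ by $\Phi/\eta(q) + O(X^{2\theta})$, where $\Phi = \sum_\gamma \Upsilon_X(\gamma)$ so that $\chi = \Phi|\Omega_Y|$. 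The $O(X^{2\theta})$ remainder, after multiplication by $\rho(\q/q)$ and summation over $q|\q$, $q<Q_0$, becomes $r^{(1)}(\q)$, while the main part proceeds to the local-density analysis.

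Next I evaluate the main part. Setting $S_0(q,\omega) = \sum_{\gamma_0}\Xi(q; f(x_0 \cdot \gamma_0\omega))$, Lemma 1 together with CRT gives the factorization $S_0(q,\omega) = \prod_{p|q} S_0(p,\omega)$. For each prime $p$ the coset representatives in Lemma 1 form a set of representatives of $\mathbb{P}^1(\Z/p\Z)$; since $\Xi(p; f(\lambda v)) = \Xi(p; f(v))$ for $\lambda \in (\Z/p\Z)^\times$, the prime-level sum $S_0(p,\omega) = \sum_{[v] \in \mathbb{P}^1(\Z/p\Z)}\Xi(p; f(v))$ is independent of $\omega$. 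A direct enumeration of zeros of $z = c^2+d^2$, $x = d^2-c^2$, and $y = 2cd$ on $\mathbb{P}^1(\Z/p\Z)$ (with a bifurcation at $p \pmod 4$ in the $f=z$ case) produces $S_0(p) = N_f(p) - (p+1)\rho(p)$ with $N_f(p) \in \{0,2\}$, from which a short calculation verifies $\rho(p) + S_0(p)/\eta(p) = \beta_f(p)$. The untruncated divisor sum factors as
\[
\sum_{q|\q}\rho(\q/q)\frac{S_0(q)}{\eta(q)} = \prod_{p|\q}\left(\rho(p)+\frac{S_0(p)}{\eta(p)}\right) = \beta_f(\q),
\]
so the truncation at $q<Q_0$ contributes $\beta_f(\q)\chi$ together with a tail $r^{(2)}(\q) = -\chi \sum_{q|\q,\, q\geq Q_0}\rho(\q/q) S_0(q)/\eta(q)$.

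Finally the two tail estimates follow from the explicit formula. The computation of $S_0(p)$ yields $|S_0(q)/\eta(q)| \leq 1/q^2$; inserting this, swapping orders, and using the standard bound $\sum_{m<N}\rho(m)\ll N^\epsilon$ gives $\sum_\q |r^{(2)}(\q)| \ll \chi \mathcal{Q}^\epsilon/Q_0$. For $r^{(1)}(\q)$, the spectral-gap error $O(X^{2\theta})$ per coset together with the bound $|\sum_\omega \Xi(q;\cdot)|\leq |\Omega_Y|$ controls each $E_q$, which after swapping orders of summation and inserting $\chi \sim X^{2\delta}|\Omega_Y|$ yields the claimed $\chi \mathcal{Q}^\epsilon Q_0/X^{2(\delta-\theta)}$. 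The main obstacle is sharpening this last estimate: a blunt per-coset application of Theorem 5 followed by the trivial bound on the $\omega$-sum overshoots by a factor of $Q_0$, and the stated bound requires pairing the Theorem 5 remainder with the mean-zero oscillation of $\Xi$ at the level of the full coset sum so as to replace $\eta(q)$ by $q^\epsilon$ using $|S_0(q)| \ll 1/q$.
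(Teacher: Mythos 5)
Your overall architecture matches the paper's: decompose along $\Gamma_{<x_0>}(q)$, invoke Theorem 5 per coset, recognize that $\Xi$ depends only on the coset via homogeneity, separately compute the local densities at primes to get $\beta_f(p)$, and split off the truncation tail as $r^{(2)}$. Your computations of $S_0(p)$, of $\rho(p)+S_0(p)/\eta(p)=\beta_f(p)$, of $|S_0(q)/\eta(q)|\leq 1/q^2$, and the resulting $r^{(2)}$ bound are all correct and essentially identical to the paper's $\mathfrak{M}_\q^{(2)}$ and $r^{(2)}_\q$ analysis.

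However, your treatment of $r^{(1)}$ has a genuine gap, and you half-recognize it. You correctly observe that the naive bound $\sum_{\gamma_0}|\Xi|\cdot O(X^{2\theta})\leq \eta(q)X^{2\theta}\approx Q_0 X^{2\theta}$ is too lossy, and you propose to repair this by ``pairing the Theorem 5 remainder with the mean-zero oscillation of $\Xi$'' and using $|S_0(q)|\ll 1/q$. This cannot work: the remainder $E(\gamma_0,q)=\sum_{h\in\Gamma_{<x_0>}(q)}\Upsilon_X(h\gamma_0)-\Phi/\eta(q)$ in Theorem 5 is only bounded in absolute value by $O(X^{2\theta})$ and genuinely varies with the coset $\gamma_0$, so $\sum_{\gamma_0}\Xi(q;f(x_0\gamma_0))E(\gamma_0,q)$ admits no cancellation and cannot be controlled by $|S_0(q)|=|\sum_{\gamma_0}\Xi|$. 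The cancellation in $S_0$ has already been spent on the main term. What actually closes the gap, and what the paper does, is an \emph{$L^1$ bound}: expand $|\Xi(q;n)|\leq\prod_{p|q}(1_{n\equiv 0(p)}+\rho(p))=\sum_{q'|q}\rho(q/q')1_{n\equiv 0(q')}$, then count cosets to get $\sum_{\gamma_0\in\Gamma_{<x_0>}(q)\backslash\Gamma}|\Xi(q;f(x_0\gamma_0))|\ll\sum_{q'|q}\rho(q/q')\,d(q')\,\eta(q/q')\ll q^\epsilon$. This uses only the smallness of $\rho(p)$ (and that the zero set mod $p$ has exactly two coset representatives), not any sign cancellation in $\Xi$. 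Plugging this in gives $|r^{(1)}(\q)|\ll|\Omega_Y|X^{2\theta}\frac{\q^\epsilon}{\q}Q_0$ and hence the stated bound after summing over $\q<\mathcal{Q}$. Your proof is incomplete until you replace the $S_0$-cancellation argument with this $L^1$ estimate.
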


\begin{proof}

\begin{equation*}
\begin{split}
\mathcal{M}_\mathfrak{q} &= \displaystyle\sum_{q|\mathfrak{q} \atop q<Q_0} \rho(\frac{\mathfrak{q}}{q}) \sum_{\gamma \in \Gamma} \Upsilon_X(\gamma) \sum_{\omega \in \Omega_Y} \Xi (q ; f(x_0\cdot \gamma\omega)) \\ 
  &= \displaystyle\sum_{q|\mathfrak{q} \atop q<Q_0} \rho(\frac{\mathfrak{q}}{q}) \sum_{\gamma \in \Gamma_{<x_0>}(q) \backslash \Gamma} \sum_{\gamma_0 \in \Gamma_{<x_0>}(q)}\Upsilon_X(\gamma_0\gamma) \sum_{\omega \in \Omega_Y} \Xi (q ; f(x_0\cdot \gamma_0\gamma\omega))\\
  &= \displaystyle\sum_{q|\mathfrak{q} \atop q<Q_0} \rho(\frac{\mathfrak{q}}{q}) \sum_{\omega \in \Omega_Y} \sum_{\gamma \in \Gamma_{<x_0>}(q) \backslash \Gamma} \Xi (q ; f(x_0\cdot \gamma\omega)) \bigl[ \sum_{\gamma_0 \in \Gamma_{<x_0>}(q)}\Upsilon_X(\gamma_0\gamma)\bigr] \\
  &= |\Omega_Y|\displaystyle\sum_{q|\mathfrak{q} \atop q<Q_0} \rho(\frac{\mathfrak{q}}{q}) \sum_{\gamma \in \Gamma_{<x_0>}(q) \backslash \Gamma} \Xi (q ; f(x_0\cdot \gamma)) \bigl[ \sum_{\gamma_0 \in \Gamma_{<x_0>}(q)}\Upsilon_X(\gamma_0\gamma)\bigr] \\
  & = |\Omega_Y|\displaystyle\sum_{q|\mathfrak{q} \atop q<Q_0} \rho(\frac{\mathfrak{q}}{q}) \sum_{\gamma \in \Gamma_{<x_0>}(q) \backslash \Gamma} \Xi (q ; f(x_0\cdot \gamma)) \bigl[ \frac{CX^{2\delta}}{[\G : \G_{<x_0>}(q)]} + O(X^{2\theta}) \bigr] \\
  & = \mathcal{M}_\q^{(1)} + r^{(1)}(\q)
\end{split}
\end{equation*}

Here we have recognized that the $\omega$ may be taken out of the sum as we are ranging over the full quotient $\Gamma_{<x_0>}(q) \backslash \Gamma$, and used Theorem 2 to replace the sum inside the brackets.
\\ \indent Let $\omega(q)$ be the number of distinct prime divisors of $q$. We've shown above that 
\begin{align*}
\frac{1}{[\Gamma: \Gamma_{<x_0>}(p)]}\displaystyle\sum_{\gamma_0 \in \Gamma_{<x_0>}(p) \backslash \Gamma} 1_{\{f(x_0 \cdot \gamma_0) \equiv 0 (p)\}} = \beta(p)
\end{align*}
Using this together with $\rho(x) \ll x^\epsilon/x$ in analyzing $r^{(1)}(\q)$, we obtain
\begin{align*}
\bigl|r^{(1)}(\mathfrak{q})| &\ll |\Omega_Y|\displaystyle\sum_{q|\mathfrak{q} \atop q<Q_0} \rho(\frac{\mathfrak{q}}{q})  \sum_{\gamma \in \Gamma_{<x_0>}(q) \backslash \Gamma} \bigl |\Xi (q ; f(x_0\cdot \gamma))\bigr| X^{2\theta}\\ 
& \leq |\Omega_Y|X^{2\theta}\displaystyle\sum_{q|\mathfrak{q} \atop q<Q_o}\rho(\frac{\q}{q})\sum_{\gamma\in \Gamma_{<x_0>}(q) \backslash\Gamma}\prod_{p | q} (1_{\{f(x_0\cdot\gamma) \equiv 0 (p)\}} + \rho(p))\\ 
& = |\Omega_Y|X^{2\theta}\displaystyle\sum_{q|\q \atop q<Q_o}\rho(\frac{\q}{q})\sum_{\gamma \in \Gamma_{<x_0>}(q) \backslash \Gamma}\sum_{q' | q} 1_{\{f(x_0\cdot \gamma \equiv 0(q')\}}\rho(\frac{q}{q'})
\end{align*}
Now, we use Lemma 1 to evaluate
\begin{align*}
\displaystyle\sum_{\g \in \G_{<x_0>}(q)}\sum_{q'|q}1_{\{f(x_0 \cdot\gamma) \equiv 0 (q')\}}
\end{align*} In order to satisfy $f(x_0 \cdot \gamma) \equiv 0 (q')$, for any $p$ dividing $q'$, we need the coordinate of $\gamma$ to be one of exactly two in $\Gamma_{<x_0>}(p) \backslash \Gamma$ which do this. However, for all other $p |q$, it can be anything. Before proceeding we must first define the $divisor$ $function$ $d(n)$ and recall an elementary fact about it for any $\epsilon>0$:
\begin{align*}
d(n) \equiv \sum_{t|n} 1 \ll n^\epsilon
\end{align*}
Therefore in total we obtain  
\begin{align*}
\displaystyle\sum_{\g \in \G_{<x_0>}(q)}\sum_{q'|q}1_{\{f(x_0 \cdot\gamma) \equiv 0 (q')\}}=2^{\omega(q')}\gamma(\frac{q}{q'}) = d(q')\gamma(\frac{q}{q'})
\end{align*} We plug this in to complete the calculation. 
\begin{align*}
|r^{(1)}(q)| & \ll |\Omega_Y|X^{2\theta}\displaystyle\sum_{q|\q \atop q<Q_o}\rho(\frac{\q}{q})\sum_{\gamma \in \Gamma_{<x_0>}(q) \backslash \Gamma}\sum_{q' | q} 1_{\{f(x_0\cdot \gamma) \equiv 0(q')\}}\rho(\frac{q}{q'}) \\
& \ll |\Omega_Y|X^{2\theta}\displaystyle\sum_{q|\q \atop q<Q_o}\rho(\frac{\q}{q})\sum_{q' | q}d(q')\gamma(\frac{q}{q'})\rho(\frac{q}{q'})\\
& =|\Omega_Y|X^{2\theta}\displaystyle\sum_{q'|q|\q \atop q<Q_o}d(q')\gamma(\frac{q}{q'})\rho(\frac{\q}{q'})\\
& \ll |\Omega_Y|X^{2\theta} \sum_{q'|q|\q \atop {\q < Q_0}} q'^\epsilon (\frac{q}{q'}) (\frac{\q}{q'})^{\epsilon-1}\ll |\Omega_Y|X^{2\theta} \q^{\epsilon-1}\sum_{q'|q|\q \atop {\q < Q_0}} q \ll |\Omega_Y| X^{2 \theta} \frac{\q^\epsilon}{\q} Q_0 \\
\end{align*}
giving us
\begin{align*}
\ll \displaystyle\sum_{\mathfrak{q}<Q} |r^{(1)}(\q)| \ll |\Omega_Y|X^{2\theta}Q_o \displaystyle\sum_{\mathfrak{q}<Q}\q^\epsilon/\q \ll \chi Q_o Q^\epsilon/ X^{2(\delta-\theta)} 
\end{align*} as stated.

Now, as in \cite{BK15} we add and subtract the large $\q$ factors back into $\mathfrak{M}_{\q}^{(1)}$ to obtain the main term $\mathfrak{M}_{\q}^{(2)}$ and another error term. Precisely, we write
\begin{equation*}
\begin{split}
 \mathfrak{M}_{\q}^{(1)} &= \chi \displaystyle\sum_{q | \q \atop{\q<Q_0}} \rho(\frac{\q}{q}) \displaystyle\sum_{\gamma_0 \in \Gamma_{<x_0>}(q) \backslash \Gamma} \frac{\Xi(q; f(x_0\cdot \gamma_0))}{[\Gamma: \Gamma_{x_0}(q)]}\\
 & =  \chi \displaystyle\sum_{q | \q} \rho(\frac{\q}{q}) \displaystyle\sum_{\gamma_0 \in \Gamma_{<x_0>}(q) \backslash \Gamma} \frac{\Xi(q; f(x_0\cdot \gamma_0))}{[\Gamma: \Gamma_{x_0}(q)]} -  \chi \displaystyle\sum_{q | \q \atop{\q \geq Q_0}} \rho(\frac{\q}{q}) \displaystyle\sum_{\gamma_0 \in \Gamma_{<x_0>}(q) \backslash \Gamma} \frac{\Xi(q; f(x_0\cdot \gamma_0))}{[\Gamma: \Gamma_{x_0}(q)]} \\
 & = \mathfrak{M}_\q^{(2)} + r^{(2)}(\q)
\end{split}
\end{equation*}
Using Lemma 1, we can calculate precisely that $\mathfrak{M}_\q^{(2)}= \chi \beta(\q)$:
\begin{equation*}
\begin{split} \mathfrak{M}_{\q}^{(2)} &= \chi \displaystyle\sum_{q | \q} \rho(\frac{\q}{q}) \displaystyle\sum_{\gamma_0 \in \Gamma_{<x_0>}(q) \backslash \Gamma} \frac{\Xi(q; f(x_0\cdot \gamma_0))}{[\Gamma: \Gamma_{x_0}(q)]}\\ 
&=\chi\rho(\q)\sum_{q | \q} \prod_{p|q} \g(p)^{-1}\rho(p)^{-1}\sum_{\gamma_0 \in \G_{<x_0>}(p) \backslash \Gamma} \Xi(p; f(x_0 \cdot \gamma_0)) \\
& =\chi\rho(\q)\sum_{q | \q}  \prod_{p|q}\g(p)^{-1} \rho(p)^{-1}(2 - \g(p)\rho(p)) \\
& = \chi\rho(\q)\sum_{q | \q}\prod_{p|q} (\frac{2}{p+1}\frac{p^2}{2p-1} - 1) \\
& = \chi\rho(\q)\prod_{p | \q} (1 + (\frac{2}{p+1}\frac{p^2}{2p-1} - 1)) = \chi\beta(\q) \\
\end{split}
\end{equation*}
The final bit to wrap up is bounding $r_\q^{(2)}$. We have 
\begin{equation*}
\begin{split}
r_\q^{(2)} &= \chi \displaystyle\sum_{q | \q \atop{q \geq Q_0}} \rho(\frac{\q}{q}) \displaystyle\sum_{\gamma_0 \in \Gamma_{<x_0>}(q) \backslash \Gamma} \frac{\Xi(q; f(x_0\cdot \gamma_0))}{[\Gamma: \Gamma_{x_0}(q)]} \\
&= \chi\rho(\q)\sum_{q | \q \atop{q \geq Q_0}}\prod_{p|q} (\frac{2}{p+1}\frac{p^2}{2p-1} - 1) \\
& \ll \chi q^\epsilon\rho(\q) \sum_{q| \q \atop{q \geq Q_0}} \prod_{p| q} \frac{1}{p} \ll \chi \q^\epsilon \frac{1}{\q}\frac{1}{Q_0}\\
\end{split}
\end{equation*}
so we obtain the desired result $\displaystyle\sum_{\q<\mathcal{Q}}|r_\q^{(2)}| \ll \chi\mathcal{Q}^\epsilon \frac{1}{Q_0}$, completing the proof.
\end{proof}

\section{Estimating $\mathcal{E}^2$}
We let $\mathcal{E} = \displaystyle\sum_{\q<Q} |r(\q)|$, where $r(\q)= \displaystyle\sum_{q| \q \atop{q\geq Q_o}} \sum_n a_T(n) \rho(\frac{\q}{q}) \Xi(q; n)$. Our goal is to prove a bound for $\mathcal{E}$ by analyzing $\mathcal{E}^2$. We proceed as in \cite{BK15}, making improvements in a couple of places. Our final result is
\begin{mythm}
$\mathcal{E}^2 = \mathcal{E}^2_\leq + \mathcal{E}^2_>$ with
\begin{align*}
\mathcal{E}_\leq^2 \ll \mathcal{Q}^\epsilon\chi^2\bigl[ \frac{X^{2(1-\delta)}}{Q_0} + T^{2(x-\theta)}\bigr ]
\end{align*}   and 
\begin{align*}
\mathcal{E}_>^2 \ll X^{2\delta} Q^\epsilon |\Omega_Y|^2 Y^4 Q^{8}/X^{3} \end{align*}
\end{mythm}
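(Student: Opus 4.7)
The plan is to square out $\mathcal{E}$ via Cauchy--Schwarz in the $\g$ variable and then split the resulting bilinear form into a low and a high moduli regime, handled by qualitatively different mechanisms. Starting from
\[ \mathcal{E} = \sum_{\q<Q} \zeta(\q) \sum_{q \mid \q,\ q \geq Q_0} \rho(\tfrac{\q}{q}) \sum_{\g \in \G} \Upsilon_X(\g) \sum_{\w \in \Omega_Y} \Xi(q; f(x_0 \g \w)), \]
Cauchy--Schwarz applied against the weight $\Upsilon_X(\g)$ yields the factor $\sum_\g \Upsilon_X(\g) \sim X^{2\delta}$ (by Theorem 5) times the expanded square
\[ \sum_\g \Upsilon_X(\g) \sum_{\q,\q'<Q} \zeta(\q)\zeta(\q') \sum_{q \mid \q,\ q'\mid\q'} \rho(\tfrac{\q}{q})\rho(\tfrac{\q'}{q'}) \sum_{\w,\w' \in \Omega_Y} \Xi(q; f(x_0 \g \w)) \Xi(q'; f(x_0 \g \w')). \]
I then fix a parameter $Q_1$ and call the contribution from $\max(q,q') \leq Q_1$ the low-moduli part $\mathcal{E}^2_\leq$ and its complement $\mathcal{E}^2_>$.

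For $\mathcal{E}^2_\leq$, the indicator condition $f(x_0\g\w) \equiv 0 \pmod{q}$ depends on $\g$ only modulo $[q,q']$, so I pull the $\g$-sum inside and decompose it along cosets of the principal congruence subgroup $\G([q,q'])$. Theorem 5 then evaluates each coset sum as $C X^{2\delta}/[\G:\G([q,q'])] + O(X^{2\theta})$. Expanding $\Xi = 1_{\{\cdot\equiv 0\}} - \rho$, invoking Lemma 1, and telescoping the multiplicative structure on primes mirrors the main-term computation of the previous theorem; the restriction $q \geq Q_0$ survives as an aggregate factor $1/Q_0$, producing the $\chi^2 Q^\epsilon X^{2(1-\delta)}/Q_0$ piece. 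The spectral-gap error $O(X^{2\theta})$, multiplied across the $|\Omega_Y|^2$ choices of $(\w,\w')$ and the divisor sums for $\q,\q',q,q'$, contributes the secondary piece $\chi^2 Q^\epsilon T^{2(x-\theta)}$.

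For $\mathcal{E}^2_>$, the index $[\G:\G([q,q'])]$ overwhelms $X^{2\delta}$ and Theorem 5 becomes useless; instead I apply Poisson summation to the $\w, \w'$ sums, regarding the projection $x_0 \w \in \Z^2$ as living on a lattice of spacing comparable to $Y$ and covolume comparable to $Y^2$. The zero frequency is cancelled exactly by the $-\rho(q)$ piece built into $\Xi$, and the non-zero frequencies produce complete exponential sums modulo $q$ that can be bounded via Gauss/Kloosterman estimates coming from the quadratic form defining $f$. Summing trivially over $\g$ against $\Upsilon_X$ contributes the outer $X^{2\delta}$, each Poisson dual contributes $Y^2$, and the nested divisor sums over $\q,\q',q,q'$ supply the $Q^8$; the non-trivial frequency cancellation produces the $1/X^3$ saving, giving the stated bound.

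The main obstacle is the high-moduli step: one must verify that the Poisson-dualized exponential sums really do save uniformly in $\g$ and in both $\w$-variables, which requires the delicate quadratic-form analysis that this paper refines from \cite{BK15}, and one must arrange this uniformly across the three polynomial choices $f \in \{x, y, z\}$, since the quadratic form dual to each case has slightly different arithmetic content. The secondary delicacy is the arithmetic bookkeeping in the low regime: one has to keep track of how the congruence subgroups $\G([q,q'])$, the $\Gamma_{<x_0>}$-cosets from Lemma 1, and the $\Xi$-expansion interact so that the cancellation producing the factor $1/Q_0$ actually manifests rather than being drowned in divisor noise.
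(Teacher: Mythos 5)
Your plan diverges from the paper's argument at several load-bearing points, and the deviations would break the proof.

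\textbf{The split criterion is wrong.} You split on $\max(q,q')$ against a tunable parameter $Q_1$, but the paper first completes the bottom-row sum over $(c,d)$ from $\G$ to all of $\Z^2$, applies Poisson summation modulo $\bar q = [q,q']$, and only then splits on $\bar q \lessgtr X$. This threshold is not a free parameter: it comes from the fact that $\hat\Phi$ is supported in $[-1,1]$, so when $\bar q < X$ the dual sum $\sum_{m,n}\hat\Phi(mX/\bar q)\hat\Phi(nX/\bar q)$ collapses to the single term $m=n=0$, and when $\bar q \geq X$ the nonzero frequencies start to contribute. Your decomposition misses the mechanism that produces the two regimes.

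\textbf{Low-moduli regime.} You propose decomposing the remaining $\g$-sum into cosets of the principal congruence subgroup $\G([q,q'])$ and applying Theorem 5 coset by coset. But $[\G:\G(\bar q)] \asymp \bar q^3$, so the spectral-gap error $O(X^{2\theta})$ per coset accumulates to $\bar q^3 X^{2\theta}$, which overwhelms the main term $X^{2\delta}$ for $\bar q$ anywhere near $X$; this route would only reach $\bar q \ll X^{2(\delta-\theta)/3}$, far short of the cutoff $\bar q < X$. What the paper actually does is entirely different: it exploits $S_1(q;\w)=0$ (Lemma 2) to force $q=q'=\tilde q$, then bounds $S_2$ (Lemma 3), and finally controls $\sum_{\w\in\Omega_Y} 1_{\{f_\w(c,d)\equiv 0(t)\}}$ via Proposition 3, which applies Theorem 5 to cosets of $\G_{<x_0>}(t)$ of index $\asymp t$ in the $\w$-variable, not to $\G(\bar q)$ cosets of index $\bar q^3$ in the $\g$-variable. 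That index-$q$ vs. index-$q^3$ distinction is exactly what makes the error tractable.

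\textbf{High-moduli regime.} You propose Poisson summation in $\w,\w'$ ``regarding $x_0\w \in \Z^2$ as living on a lattice of spacing $\sim Y$.'' This cannot work: $\Omega_Y = B_Y \cap \G$ is a thin subset of a thin group, and the orbit $\{x_0\w : \w \in \Omega_Y\}$ has no lattice structure at all — there is no Poisson summation formula for it. The paper's Poisson summation continues to act on $(c,d)$ (which was deliberately completed to $\Z^2$); the nonzero frequencies produce the sums $S_3 = S_4(q_1)S_4(q_1')S_5(\tilde q)$, Lemma 4 bounds $S_4$ in terms of $(f_\w(l,-k),q)$, and the $\w,\w'$ sums are then handled by counting solutions of quadratic congruences $f_\w(l,-k)\equiv 0\pmod t$ rather than by any oscillatory mechanism. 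The $Y^4$ in the final bound comes from $(f_{\w'}(l,-k),q_1') \ll (\bar q/X)^2 Y^2$ and a further $Y^2$ loss from reducing the coefficient gcd, not from a Poisson dual; the $Q^8$ comes from the $\tilde q^2 \bar q^3/X^3$ sum over $\bar q < Q^2$ rather than from divisor sums over $\q,\q'$. You would need to rebuild the entire high-moduli analysis around the quadratic-congruence counting to get the stated $Q^\epsilon|\Omega_Y|^2Y^4Q^8/X^3$.
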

To begin, we let $\zeta(\q) =\text{sgn } r(\q)$, and we reverse the order of summation to write
\begin{equation*}
\begin{split}
\mathcal{E} &= \displaystyle\sum_{\q<\mathcal{Q}}\zeta(\q)\sum_na_T(n)\sum_{q |\q \atop{q \geq Q_0}} \Xi(q; n)\rho(\frac{\q}{q}) \\
&=\sum_{Q_0 \leq q< \mathcal{Q}} \sum_n a_T(n) \Xi(q; n) \sum_{\q \equiv 0 (q) \atop{\q < \mathcal{Q}}} \rho(\frac{\q}{q})\zeta(\q) \\
&=\sum_{Q_0 \leq q< \mathcal{Q}} \sum_n a_T(n) \Xi(q; n) \zeta_1(q)\\
\end{split}
\end{equation*}
where $\zeta_1(q)=\displaystyle\sum_{q |\q \atop{q \geq Q_0}} \zeta(\q)\rho(\frac{\q}{q})$. A simple calculation using $\rho(x) \ll x^{\epsilon-1}$ shows that $\zeta_1(q)\ll T^\epsilon$ for all $q<\mathcal{Q}$, assuming $\mathcal{Q}$ is a power of $T$ (which we will take it to be later). Now we put in the definition of $a_T(n)$ and apply Cauchy-Schwartz in the $\g$-variable, utilizing the fact that the bottom row of an element $\g \in \G$ is unique because $\G$ has no parabolic elements. We also replace $\Upsilon$ with a smooth sum $\Phi$ on $(c, d) \in \Z^2$ satisfying $\Phi(x) \geq 1$ for $x \in [-1, 1]$ and having Fourier transform $\hat\Phi$ supported in $[-1, 1]$.  In keeping with the notation from \cite{BK15}, we let $f((c, d) \cdot \w) = f_\w (c, d)$. 
\begin{equation*}
\begin{split}
\mathcal{E} &=\sum_{Q_0 \leq q< \mathcal{Q}} \sum_{\g \in \G \atop{\g = \begin{pmatrix} * & * \\ c& d\end{pmatrix}}} \Upsilon_X(\g)\sum_{\omega \in \Omega_Y}\Xi(q; f(x_0 \cdot \g\w)) \zeta_1(q)\\
\mathcal{E}^2 &\leq X^{2\delta} \sum_{\g \in \G \atop{\g = \begin{pmatrix} * & * \\ c& d\end{pmatrix}}} \Phi(\frac{c}{X})\Phi(\frac{d}{X})\bigl| \sum_{Q_0 \leq q <\mathcal{Q}} \sum_{\w \in \Omega_Y} \Xi(q; f_\omega(c, d)) \zeta_1(q)\bigr|^2 \\ 
\end{split}
\end{equation*}

Now we complete the sum to include all possible bottom rows $(c,d)$ rather than just those arising from $\G$ in order to be able to use Poisson summation. We now introduce some notation which will be used throughout the section: $\bar q = [q, q']$, $\tilde{q} = (q, q')$, $q_1 = \frac{q}{\tilde{q}}$, $q_1' = \frac{q'}{\tilde{q}}$. Rearrange the sum and expand the square to split the sum along the q-variable and then apply Poisson summation:
\begin{align*}
&\mathcal{E}^2 \leq T^\epsilon X^{2\delta} \sum_{Q_0\leq q, q' < \mathcal{Q}} \bigl| \sum_{\w, \w' \in \Omega_Y} \sum_{c, d} \Phi(\frac{c}{X})\Phi(\frac{d}{X})\Xi(q; f_\w(c, d)) \Xi(q'; f_{\w'}(c,d))\bigr| \\
&= T^\epsilon X^{2\delta} \sum_{Q_0\leq q, q' < \mathcal{Q}} \bigl| \sum_{\w, \w' \in \Omega_Y} \sum_{c, d (\bar q)} \Xi(q; f_\w(c, d)) \Xi(q'; f_{\w'}(c,d)) \sum_{m, n \in \Z}\Phi(\frac{c + m\bar q}{X})\Phi(\frac{d+n \bar q}{X}) \bigr| \\
(\dagger)&=T^\epsilon X^{2\delta} \sum_{Q_0\leq q, q' < \mathcal{Q}} \bigl| \sum_{\w, \w' \in \Omega_Y} \sum_{c, d (\bar q)} \Xi(q; f_\w(c, d)) \Xi(q'; f_{\w'}(c,d)) \frac{X^2}{\bar q^2}\sum_{m, n \in \Z} \hat\Phi(\frac{mX}{\bar q})\hat\Phi(\frac{nX}{\bar q})e_{\bar q}(cm+dn)  \bigr| \\
\end{align*}
This last line shows how we can profit from splitting the sum again along $\bar q<X$ or $\bar q \geq X$. Accordingly, we write $\mathcal{E}^2 = \mathcal{E}^2_\leq+ \mathcal{E}^2_{>}$ to refer to the above sum on $\bar q \leq X$ and $\bar q > X$ respectively, and we begin with $\mathcal{E}^2_<$.

\subsection*{Bounding $\mathcal{E}^2_<$}
\begin{myprop}
$\mathcal{E}^2_< \ll Q^\epsilon \chi^2 ( \frac{X^{2(1-\delta)}}{Q_0}  + \frac{X^{2(1-\delta)}}{Y^{2(\delta-\theta)}})$
\end{myprop}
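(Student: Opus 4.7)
The plan is to use the support condition on $\hat\Phi$ to collapse the Fourier sum in $(\dagger)$ to a single frequency, then exploit the mean-zero property of $\Xi$ together with a coset decomposition for $\Gamma_{<x_0>}(q)$ (as furnished by Lemma 1) to split off the two error terms.

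First, since $\hat\Phi$ is supported in $[-1,1]$, any nonzero integer $m$ satisfies $|mX/\bar q| \geq 1$ in the regime $\bar q \leq X$, so $\hat\Phi(mX/\bar q) = 0$ unless $m = 0$. Only the $(m,n)=(0,0)$ frequency survives, collapsing $\mathcal{E}^2_{\leq}$ to
\begin{align*}
T^\epsilon X^{2\delta+2} \sum_{\substack{Q_0 \leq q,q' < \mathcal{Q} \\ \bar q \leq X}} \frac{1}{\bar q^2} \bigl| \sum_{\omega, \omega' \in \Omega_Y} \sum_{c,d(\bar q)} \Xi(q;f_\omega(c,d))\Xi(q';f_{\omega'}(c,d)) \bigr|.
\end{align*}
Next I would apply CRT to factor the inner residue sum as a product over primes of $\bar q$: primes $p \mid \tilde q$ produce a joint factor $A_p(\omega,\omega')=\sum_{c,d(p)}\Xi(p;f_\omega)\Xi(p;f_{\omega'})$, while primes $p \mid q_1$ (resp.\ $p \mid q_1'$) produce an isolated factor $B_p(\omega)=\sum_{c,d(p)}\Xi(p;f_\omega)$ (resp.\ for $\omega'$). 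The calibration $\rho(p)=(2p-1)/p^2$ is designed exactly so that $B_p(\omega)=0$ at every prime $p \nmid \mathcal{B}$, since the zero count $N_p(\omega)$ of $f_\omega \bmod p$ is then precisely $2p-1=\rho(p)p^2$. Absorbing the finitely many exceptional primes into $\mathcal{B}$, this annihilates every pair $(q,q')$ with $q \neq q'$ and reduces us to the diagonal $q = q'$.

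For $q = q'$, I would split $\omega, \omega' \in \Omega_Y$ along the $\eta(q)$ cosets of $\Gamma_{<x_0>}(q) \backslash \Gamma$ given by Lemma 1 and apply Theorem 5 (after a standard smoothing of the sharp cutoff in $\Omega_Y$) to obtain $|\{\omega \in \Omega_Y : \omega \in [\omega_0]\}| = |\Omega_Y|/\eta(q) + O(Y^{2\theta})$ uniformly in the coset. On coincident pairs $[\omega]=[\omega']$, the product $\prod_{p\mid q} A_p$ is of size $\ll q^{1+\epsilon}$ since each $A_p \ll p$; on non-coincident pairs, a second application of the mean-zero identity after isolating one coset variable kills the main contribution. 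Using $\rho(q)/\eta(q) \ll q^{\epsilon-2}$, the coincident main contributes
\begin{align*}
\ll X^{2\delta+2} |\Omega_Y|^2 \sum_{q \geq Q_0} \frac{\rho(q)}{\eta(q)} \ll \mathcal{Q}^\epsilon \chi^2 \frac{X^{2(1-\delta)}}{Q_0},
\end{align*}
and the Theorem 5 error contributes
\begin{align*}
\ll X^{2\delta+2}|\Omega_Y| Y^{2\theta} \sum_{q<\mathcal{Q}} \rho(q) \ll \mathcal{Q}^\epsilon \chi^2 \frac{X^{2(1-\delta)}}{Y^{2(\delta-\theta)}},
\end{align*}
upon using $\chi = |\Omega_Y| X^{2\delta}$ and $|\Omega_Y| \asymp Y^{2\delta}$.

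The hardest part will be establishing the uniform vanishing of $B_p(\omega)$ across all three forms $f \in \{z, \tfrac{1}{12}xy, \tfrac{1}{60}xyz\}$: the primes $p=2$, primes $p \equiv 3 \pmod 4$ (for $f=z$), and primes at which $f \bmod p$ degenerates all require case-by-case verification, each absorbed into $\mathcal{B}$ at harmless $\mathcal{Q}^\epsilon$ cost. A secondary subtlety is the non-coincident coset analysis: one must verify that distinct cosets of $\Gamma_{<x_0>}(q)$ genuinely move $f_\omega \bmod q$ into an independent orbit so that the mean-zero identity can be reapplied bilinearly. This is implicit in Lemma 1 through the explicit coset representatives $\{(0,1)\} \cup \{(1,d): d \in \Z/p\Z\}$ but must be made explicit in the present bilinear context before the coincidence reduction is legitimate.
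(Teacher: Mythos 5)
Your first two steps (collapse to the $m=n=0$ frequency, CRT factorization and annihilation of the off-diagonal $q\neq q'$ via the vanishing of $S_1 = B_p$) match the paper exactly, and your final error accounting is numerically consistent with the target. However, the middle step — the joint coset decomposition of $\omega,\omega'\in\Omega_Y$ along $\Gamma_{<x_0>}(q)\backslash\Gamma$ — contains a genuine gap, and it is not merely the ``secondary subtlety'' you flag at the end; the decomposition cannot be repaired with that subgroup.

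The difficulty is that $\Gamma_{<x_0>}(q)$ does not control the residue of the quadratic form $f_\omega$ modulo $q$. The zero locus of $f_\omega$ in $(\Z/p\Z)^2$ is $Z_f\,\omega^{-1}$, a union of two lines through the origin, and whether $A_p(\omega,\omega')$ is $O(1)$ or of size $p$ depends on whether $Z_f\omega^{-1}$ and $Z_f\omega'^{-1}$ share a line. Writing $\omega'=\gamma_0\omega$, this is a condition on $\omega\gamma_0^{-1}\omega^{-1}$ relative to the stabilizer of $Z_f$ — a condition that depends on $\omega$. Since $x_0=(0,1)$ generically does \emph{not} lie on $Z_f\omega^{-1}$, the subgroup $\Gamma_{<x_0>}(p)$ is unrelated to this stabilizer, and your ``coincidence'' criterion $[\omega]=[\omega']$ in $\Gamma_{<x_0>}(q)\backslash\Gamma$ neither implies nor is implied by $A_p$ being large. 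Consequently the claimed vanishing on non-coincident pairs fails, and the trivial bound $\prod_p A_p\ll q^{1+\epsilon}$ would then have to be applied to essentially \emph{all} $\approx|\Omega_Y|^2$ pairs, giving a bound that loses by a full factor of $q$ over the target.

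The paper's route avoids this by changing the order of operations: it first completes the $S_2$ sum (Lemma 3) to reduce to counting $(c,d)\bmod t$, $t\mid q$, with $f_\omega(c,d)\equiv f_{\omega'}(c,d)\equiv 0\ (t)$; then, with $\omega$ and the residue $(c,d)$ \emph{fixed}, it counts $\omega'\in\Omega_Y$ satisfying $f((c,d)\cdot\omega')\equiv 0\ (t)$ via Proposition 3 — which decomposes $\omega'$ along cosets of $\Gamma_{<(c,d)>}(q)$, the correct ``$\omega$-independent'' subgroup for this counting problem. Your argument needs that intermediate reduction in $(c,d)$ before any coset decomposition of the $\omega'$ variable becomes available; as written, the bilinear coincidence reduction is not legitimate.
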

Simplifying $(\dagger)$, as $m=n=0$ is the only contributing term from the last sum, we have 
\begin{align*}
\mathcal{E}^2_< \leq T^\epsilon X^{2(\delta+1)} \sum_{Q_0 \leq q, q'< Q \atop {\bar q< X}} |\sum_{\w, \w' \in \Omega_Y} \sum_{c, d( \bar q)} \Xi(q; f_\w(c, d)) \Xi(q'; f_{\w'}(c, d))\frac{1}{\bar q^2} | 
\end{align*}
Now, we analyze these oscillating sums precisely. Let 
\begin{align*} S_1(q; \w) &=\frac{1}{q^2} \sum_{c, d (q)} \Xi(q; f_\w(c, d)) \\
S_2(q; \w, \w') &= \frac{1}{q^2} \sum_{c, d (q)} \Xi(q; f_\w(c, d))\Xi(q; f_{\w'}(c, d))
\end{align*}
\begin{mylem}
If $q>1,$ $ S_1(q; \w)=0$ for $f=f, g, h$. 
\end{mylem}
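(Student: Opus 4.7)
The plan is to exploit multiplicativity to reduce to the prime case, use the $SL(2,\mathbb{Z})$ invariance to eliminate $\omega$, and then just count zeros of $f$ modulo $p$. Since $\Xi(q;\cdot)$ is multiplicative in $q$ as a product over primes $p\mid q$, and the Chinese Remainder Theorem gives $(\mathbb{Z}/q\mathbb{Z})^2\cong \prod_{p\mid q}(\mathbb{Z}/p\mathbb{Z})^2$, one sees that
\begin{equation*}
S_1(q;\omega)=\prod_{p\mid q}S_1(p;\omega).
\end{equation*}
So it suffices to prove $S_1(p;\omega)=0$ for every prime $p\mid q$ (with $(p,\mathcal{B})=1$, and $p\equiv 1(4)$ in the case $f=z$, as those are the primes where $\beta(p)\neq 0$).

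Next, since $\omega\in SL(2,\mathbb{Z})$ has determinant one, the right action $(c,d)\mapsto (c,d)\omega$ is a bijection of $(\mathbb{Z}/p\mathbb{Z})^2$ whenever $(p,\mathcal{B})=1$. The substitution $(c',d')=(c,d)\omega$ therefore yields
\begin{equation*}
S_1(p;\omega)=\frac{1}{p^2}\sum_{c',d'\,(p)}\Xi(p;f(c',d')),
\end{equation*}
reducing the problem to a single fixed form $f$ independent of $\omega$.

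The final step is a zero count. Unpacking $\Xi(p;n)=1_{\{n\equiv 0(p)\}}-\rho(p)$ and using $p^2\rho(p)=2p-1$, we get
\begin{equation*}
p^2 S_1(p;\omega)=\#\{(c,d)\in\mathbb{F}_p^2:f(c,d)\equiv 0\}-(2p-1).
\end{equation*}
For $f=y=2cd$ (with $p$ odd since $2\mid\mathcal{B}$) the zero set is the union of the two coordinate axes, giving $2p-1$ points. For $f=x=d^2-c^2=(d-c)(d+c)$ the zero set is the union of the two lines $d=\pm c$, intersecting at the origin, again $2p-1$ points. For $f=z=c^2+d^2$, when $p\equiv 1(4)$ we can fix $i\in\mathbb{F}_p$ with $i^2=-1$ and factor $z=(c+id)(c-id)$, so the zero set is again two distinct lines through the origin, $2p-1$ points. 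In all three cases the count is precisely $2p-1$, so $S_1(p;\omega)=0$.

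The only subtle point, and the reason $\rho(p)$ is defined to be $(2p-1)/p^2$, is that this is exactly the local density one gets from a pair of lines through the origin; once the change of variables has removed $\omega$, the rest is designed to cancel. The one case that genuinely fails is $f=z$ with $p\equiv 3(4)$, where $z$ is anisotropic and has only the trivial zero, but such primes are either absorbed into $\mathcal{B}$ or make no contribution to the main term since $\beta(p)=0$ there, and so they do not affect the statement.
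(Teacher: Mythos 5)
Your proof is correct and follows essentially the same route as the paper: multiplicativity reduces to a prime modulus, the $SL(2,\mathbb{Z})$ change of variables removes $\omega$, and then one verifies that the number of zeros of $f$ in $\mathbb{F}_p^2$ is exactly $2p-1 = p^2\rho(p)$. Your write-up is actually somewhat more careful than the paper's, which simply asserts "it is clear that for any of our possible $f$'s there are $2p-2$ [nonzero] solutions"; you spell out the factorization into two lines through the origin in each of the three cases and, helpfully, make explicit the caveat that for $f=z$ the count genuinely fails at $p\equiv 3\pmod 4$, a restriction the paper leaves implicit (it is imposed via the hypothesis of Proposition 3 and the vanishing of $\beta(p)$ there).
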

\begin{proof}
We have $S_1(q; \w) = \displaystyle\prod_{p|q}S_1(p; \w)$, so it is enough to prove it for primes. We have
\begin{align*}
p^2S_1(p; \w) = \sum_{c, d(p)} 1_{f_\w(c, d) \equiv 0 (p)} -(2p-1). 
\end{align*}
Above we are reduced to counting solutions to $f_\w(c, d) \equiv 0 (p)$ in the box $c, d (p)$. Noting that $c=d=0$ is always a solution, we count one and then realize that after the change of variables $\g \to \g \w$, this is equivalent to counting solutions to $f(c,d) = 0 (p)$ with $(c, d) \neq (0, 0)$. Now it is clear that for any of our possible $f$'s there are $2p-2$ solutions, plus one for $c=d=0$, giving us zero as requested.
\end{proof}

\begin{myprop} Let $\Omega_Y = \{ \gamma \in \Gamma : ||\gamma|| <Y\}$, $x \in \Z/q\Z \times \Z/q\Z$ with at least one coordinate coprime to q, and $f$ as above. If $f=z$, assume all primes $p|q$ are congruent to 1 mod 4. Then for $f=x,y,z$ we have 
\[ \displaystyle\sum_{\omega \in \Omega_Y} 1_{\{f(x \cdot\omega) \equiv 0(q)\}} \ll \frac{|\Omega_Y|}{q^{1-\epsilon}} + q^{\epsilon}Y^{2\theta}\]
\end{myprop}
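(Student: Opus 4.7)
The plan is to follow the Hong--Kontorovich coset decomposition along $\Gamma_{<x>}(q)$ (as in the analysis of $\mathfrak{M}_\q^{(1)}$ above) but with the smoothed $\Upsilon_X$ replaced by the sharp indicator $1_{\|\omega\|<Y}$. Since $f$ is homogeneous of positive degree in $(c,d)$, the condition $f(x \cdot \omega) \equiv 0 \pmod q$ is invariant under multiplying $x \cdot \omega$ by a unit mod $q$, and so descends to a well-defined function on $\Gamma_{<x>}(q) \backslash \Gamma$. Writing $\omega = \gamma_0 \omega_1$ with $\gamma_0 \in \Gamma_{<x>}(q)$ and $\omega_1$ a coset representative, I get
\[
\sum_{\omega \in \Omega_Y} 1_{\{f(x\cdot\omega)\equiv 0(q)\}} = \sum_{\omega_1 \in \Gamma_{<x>}(q)\backslash \Gamma} 1_{\{f(x\cdot\omega_1)\equiv 0(q)\}} \cdot \#\{\gamma_0 \in \Gamma_{<x>}(q) : \|\gamma_0\omega_1\| < Y\}.
\]

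Next I would apply Theorem 5 to each inner count with $\tilde\Gamma(q) = \Gamma_{<x>}(q)$. Bracketing the sharp indicator between $\Upsilon_{(10/11)Y}$ and $\Upsilon_{(10/9)Y}$ and invoking Theorem 5 on each bracket yields
\[
\#\{\gamma_0 \in \Gamma_{<x>}(q) : \|\gamma_0\omega_1\|<Y\} = \frac{|\Omega_Y|}{[\Gamma:\Gamma_{<x>}(q)]} + O(Y^{2\theta}),
\]
uniformly in $\omega_1$. The Chinese Remainder Theorem argument of Lemma 1 carries over verbatim to any $x$ with a unit coordinate mod $q$, since the $SL_2(\mathbb{Z}/p\mathbb{Z})$-orbit of the line through such an $x$ is all of $\mathbb{P}^1(\mathbb{Z}/p\mathbb{Z})$, giving $[\Gamma:\Gamma_{<x>}(p)] = p+1$ and hence $[\Gamma:\Gamma_{<x>}(q)] = \eta(q) \geq q$.

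The remaining task is to count cosets on which $f(x \cdot \omega_1) \equiv 0 \pmod q$. By the CRT multiplicativity from Lemma 1, this reduces to a prime-by-prime count, and the analysis of $\mathfrak{M}_\q^{(2)}$ in the proof of Theorem 6 already shows that exactly $2$ of the $p+1$ cosets in $\Gamma_{<x>}(p)\backslash \Gamma$ satisfy $f \equiv 0 \pmod p$ (with the hypothesis $p \equiv 1(4)$ securing the existence of $\sqrt{-1} \pmod p$ when $f=z$). The contributing-coset count is therefore $2^{\omega(q)} \ll q^\epsilon$, and assembling the pieces gives
\[
\sum_{\omega \in \Omega_Y} 1_{\{f(x\cdot\omega)\equiv 0(q)\}} \ll q^\epsilon \left( \frac{|\Omega_Y|}{\eta(q)} + Y^{2\theta} \right) \ll \frac{|\Omega_Y|}{q^{1-\epsilon}} + q^\epsilon Y^{2\theta}.
\]

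The main subtlety is twofold: first, transferring the index identity and the $2$-out-of-$(p+1)$ zero count from the specific base point $x_0 = (0,1)$ of Lemma 1 and Theorem 6 to arbitrary $x$ with a unit coordinate mod $q$ (which follows from $SL_2$-transitivity on primitive vectors and the $SL_2$-invariance of the zero set structure); and second, passing from the smoothed $\Upsilon$ for which Theorem 5 is stated to the sharp ball $\Omega_Y$, handled by the bracketing above with the constant-factor loss absorbed into $\ll$.
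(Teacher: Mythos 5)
Your proof is correct and follows essentially the same route as the paper's: decompose along $\Gamma_{<x>}(q)\backslash\Gamma$, invoke Theorem 5 on the inner count, transfer the coset calculation back to $x_0$, and use the $2^{\omega(q)}\ll q^\epsilon$ count of cosets on which $f\equiv 0$. The only differences are cosmetic --- you make the sharp-to-smooth bracketing explicit (the paper applies Theorem 5 to the sharp indicator without comment) and phrase the transfer to general $x$ via $SL_2$-transitivity on $\mathbb{P}^1(\Z/p\Z)$ rather than the paper's explicit conjugating element $\eta$ with $x\equiv \eta\cdot x_0\ (q)$.
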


\begin{proof}
We begin by decomposing the sum into
\begin{equation*}
\begin{split}
& \displaystyle\sum_{\omega \in \Omega_Y} 1_{\{f(x\cdot\omega) \equiv 0(q)\}} = \displaystyle\sum_{\omega_0 \in \G_{<x>}(q) \backslash \Gamma} \displaystyle\sum_{\gamma \in \G_{<x>}(q)}1_{\{f(x\cdot\gamma\omega_0) \equiv 0(q)\}}1_{\{||\gamma\omega_0||<Y\}}\\
& = \displaystyle\sum_{\omega_0 \in \G_{<x>}(q) \backslash \Gamma} 1_{\{f(x\cdot\omega_0) \equiv 0(q)\}} \displaystyle\sum_{\gamma \in \G_{<x>}(q)}1_{\{||\gamma\omega_0||<Y\}} \\ 
& \ll \displaystyle\sum_{\omega_0 \in \G_{<x>}(q) \backslash \Gamma}1_{\{f(x\cdot \omega_0)\equiv 0(q)\}} (\frac{1}{[\Gamma: \G_{<x>}(q)]}|\Omega_Y| + O(Y^{2\theta}))\\
\end{split}
\end{equation*} making use of Theorem 5 in the last line.
It remains to count precisely when $f(x\cdot \omega_0) \equiv 0(q)$.  We begin by noticing that by our choice of $x$, there always exists a $\eta \in \G$ such that $x \equiv \eta \cdot x_0 (q)$. Now, we can see that $\G_{<x>}(q) = \G_{<x_0>}(q)$ - indeed, let $\w \in \G_{<x_0>}(q)$. Then $x_0 \cdot w \equiv a (q) \Leftrightarrow \eta x_0 w \equiv a \eta x_0 = a x (q)$. Therefore, we have 
\begin{align*}
& \sum_{\omega_0 \in \Gamma_{<x>}(q) \backslash \Gamma} 1_{\{f(x \cdot \omega_0) \equiv 0(q)\}} & = \sum_{\omega_0 \in \Gamma_{<x>}(q) \backslash \Gamma} 1_{\{f(x_0\cdot \eta\omega_0)\equiv 0 (q)\}} &= \sum_{\omega_0 \in \Gamma_{<x>}(q) \backslash \Gamma}1_{\{f(x_0 \cdot \omega_0) \equiv 0 (q)\}} \\ 
\end{align*} (as we are summing over a full set of cosets). 

Now, utilizing Lemma 1, we have 
\begin{align*}
& \displaystyle\sum_{\omega_0 \in \G_{<x_0>}(q) \backslash \Gamma} 1_{\{f(x_0 \cdot \omega_0) \equiv 0(q)\}} = \sum_{\omega \in \prod_{p|q}\G_{<x_0>}(p) \backslash \Gamma} \prod_{p|q} 1_{\{f(x_0 \cdot \omega_p) \equiv 0(p)\}} \\  
&= \prod_{p|q} \sum_{\omega_0 \in \Gamma_{x_0}(p) \backslash \Gamma} 1_{\{f(x_0 \cdot \omega_0) \equiv 0(p)\}}
\end{align*}
Now, by Lemma 1 matrices with bottom rows $\{ (0, 1)\} \cup \{(1, d) | d\in \Z/p\Z \}$ form a complete set of coset representatives for $\G_{<x_0>}(p) \backslash \G$. We see that exactly two of these satisfy $f(x_0 \cdot \omega_0) \equiv 0 (p)$ for any $f$, $p$ that we allow. Therefore modulo p we obtain exactly 2 solutions, and so using Lemma 1, we see that if $q = p_1 \cdots p_l$, we have $2^l = d(q) \ll q^\epsilon$ solutions in total. Finally, inserting $[\G_{<x_0>}(p) : \G] = p+1$ and so $[\G_{<x_0>}(q) : \G] \asymp q$, we obtain the desired result. 
\end{proof}

We state the following lemma from \cite{BK15},  changing their $f$ (=$z$) to ours unimpeded. The proof comes easily from Proposition 3. 
\begin{mylem}
$|S_2|\ll \frac{q^\epsilon}{q^2} \displaystyle\sum_{q_1 |q} \sum_{c, d (q_1), \atop{(c, d, q_1) =1}} 1_{ f_\w(c, d)\equiv 0 (q_1) \atop{ f_\w'(c, d) \equiv 0(q_1)}}$
\end{mylem}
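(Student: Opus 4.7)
My plan is to bound $|S_2|$ multiplicatively, expand termwise, and then pull the primitivity condition $(c,d,q_1)=1$ out of the resulting count by exploiting homogeneity of $f_\w$.

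First, since $\Xi(\cdot;n)$ and $\rho$ are multiplicative and supported on squarefrees, CRT applied to the sum over $(c,d)\,(q)$ yields the factorization $S_2(q;\w,\w') = \prod_{p\mid q} S_2(p;\w,\w')$, so it suffices to work prime by prime. Using the trivial bound $|\Xi(p;n)| \leq 1_{n \equiv 0 (p)} + \rho(p)$ on each factor and expanding the product across primes, $|\Xi(q;f_\w)\Xi(q;f_{\w'})|$ is controlled by a sum indexed by ordered factorizations $q = q_1 q_2 q_3 q_4$ into pairwise coprime divisors: the primes of $q_1$ supply the joint indicator $1_{f_\w\equiv f_{\w'} \equiv 0\,(q_1)}$, those of $q_2$ (resp.\ $q_3$) supply a single indicator paired with a factor $\rho(p)$, and those of $q_4$ contribute $\rho(p)^2$.

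Next, I would sum over $(c,d)\,(q)$ and again use CRT to split the count into factors modulo each $q_i$. The $q_2$, $q_3$, and $q_4$ factors each collapse to an $O(q_i^\epsilon)$ contribution, using the local count $N_\w(p) = 2p-1$ worked out in the proof of Lemma 3 (the same ingredient that underlies Proposition 3), together with $\rho(p) \ll 1/p$. Absorbing the sum over the $(q_2,q_3,q_4)$ factorizations of $q/q_1$ via the divisor bound $d_3(q/q_1) \ll q^\epsilon$, this reduces the task to showing
\[
\frac{1}{q^2}\sum_{q_1\mid q} N_{\mathrm{both}}(q_1) \ll \frac{q^\epsilon}{q^2} \sum_{q_1\mid q}\sum_{c,d\,(q_1),\,(c,d,q_1)=1} 1_{f_\w \equiv f_{\w'} \equiv 0\,(q_1)},
\]
where $N_{\mathrm{both}}(q_1)$ denotes the unrestricted count modulo $q_1$.

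Finally, I would extract the primitivity condition by stratifying $N_{\mathrm{both}}(q_1)$ according to $g = (c,d,q_1)$ and substituting $c = gc'$, $d = gd'$. Because $f_\w$ is homogeneous of degree two and $q_1$ is squarefree (so that $\gcd(q_1,g^2) = g$), the conditions $f_\w(gc',gd') \equiv f_{\w'}(gc',gd') \equiv 0\,(q_1)$ reduce cleanly to $f_\w \equiv f_{\w'} \equiv 0\,(q_1/g)$ with $(c',d',q_1/g)=1$. After swapping the order of summation on $g \mid q_1 \mid q$, the inner divisor sum over $q_1$ contributes only an additional $q^\epsilon$, completing the claimed bound. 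The main obstacle is precisely this last step: introducing $(c,d,q_1)=1$ requires the degree-two homogeneity of $f_\w$ to line up with the squarefreeness of $q_1$, so that the gcd substitution proceeds without fractional remainders; everything else is CRT together with the local counts already available from Lemma 3.
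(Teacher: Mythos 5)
Your proof is sound, though it is somewhat more roundabout than necessary; the paper itself gives no proof, deferring to \cite{BK15} and (somewhat confusingly) to ``Proposition~3,'' so there is no paper argument to compare against directly. The core steps you take --- multiplicativity via CRT, the local count $N_\w(p)=2p-1$ (established in the proof of Lemma~2, not Lemma~3 as you wrote, a harmless slip), the divisor-bound absorption, and the gcd stratification using degree-$2$ homogeneity against squarefreeness --- are all correct, and the final bound follows.

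Two places could be tightened. First, bounding $|\Xi(p;n)|\le 1_{n\equiv 0(p)}+\rho(p)$ and then expanding into ordered factorizations $q=q_1q_2q_3q_4$ throws away the fact that the ``mixed'' and ``pure $\rho$'' terms cancel each other exactly: expanding $\Xi\cdot\Xi$ literally, the cross terms each contribute $-\rho(p)\sum_{c,d(p)}1_{f_\w\equiv 0(p)}=-p^2\rho(p)^2$, which combines with the $+p^2\rho(p)^2$ term to give $p^2 S_2(p)=N_{\mathrm{both}}(p)-p^2\rho(p)^2$ on the nose. This is precisely the content of Lemma~2 ($S_1(p;\w)=0$) used at each prime. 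Second, since $q_1$ is squarefree, the primitivity condition can be introduced prime-by-prime rather than by a global gcd stratification: at each $p\mid q_1$ the only non-primitive residue is $(c,d)\equiv(0,0)\,(p)$, so $N_{\mathrm{both}}(p)=N_{\mathrm{both}}^{\mathrm{prim}}(p)+1$, and taking the product over $p\mid q_1$ and then summing over $q_1\mid q$ yields the stated divisor sum directly. Your degree-$2$ homogeneity plus $\gcd(g^2,q_1)=g$ argument proves the same identity, just with more moving parts. Neither of these is a gap --- both are valid --- but the exact cancellation from Lemma~2 is the intended mechanism, and noticing it shortens the argument considerably.
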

We have $ \frac{1}{\bar q^2} \displaystyle\sum_{c, d (\bar q)} \Xi(q; f_\w(c, d))\Xi(q'; f_{\w'}(c, d)) = S_1(q_1, \w)S_1(q_1', \w')S_2(\tilde q; \w, \w')$. In order for this not to be zero we need $q_1= q_1' = 1$, and so we have 
\begin{align*}
\mathcal{E}^2 &= \hat\Phi(0)^2 X^{2+2\delta} \sum_{Q_0 \leq q \leq Q}| \sum_{\w, \w' \in \Omega_Y} S_2(q; \w, \w')| \\
&\ll \hat\Phi(0)^2 X^{2+2\delta} \sum_{Q_0 \leq q \leq Q} \frac{q^\epsilon}{q^2}\sum_{t | q} \sum_{c, d (t)}\sum_{\w, \w' \in \Omega_Y}  1_{f_\w(c, d) \equiv 0(t) \atop{f_{\w'}(c, d) \equiv 0 (t)}}
\end{align*}
Now, fixing $\w$, on each prime there are $2p-1$ solutions to $f_\w(c, d) \equiv 0 (p)$, so in all we have $\ll t^{1+\epsilon}$ pairs $(c, d, t) =1$ such that $f_\w (c, d) \equiv 0 (t)$.
\begin{align*}
&\ll X^{2+2\delta} \sum_{Q_0 \leq q \leq Q} \frac{q^\epsilon}{q^2} \sum_{t|q} \sum_{\w \in \Omega_Y} \sum_{c, d (t) \atop{ f_\w (c,d)\equiv 0 (t)}} \sum_{\w' \in \Omega_Y} 1_{f_{\w'}(c, d) \equiv 0 (t)} \\
&\ll Q^\epsilon X^{2+2\delta} \sum_{Q_0 \leq q \leq Q} \frac{q^\epsilon}{q^2} \sum_{t|q} \sum_{\w \in \Omega_Y} \sum_{c, d (t) \atop{ f_\w (c,d)\equiv 0 (t)}}[  \frac{|\Omega_Y|}{t} + O(Y^{2\theta})]
\end{align*} 
where the last line is using Proposition 3. Now we break it into the corresponding two sums and see what we get:
\begin{align*}
&Q^\epsilon X^{2+2\delta} \sum_{Q_0 \leq q \leq Q} \frac{q^\epsilon}{q^2} \sum_{t|q} \sum_{\w \in \Omega_Y} \sum_{c, d (t) \atop{ f_\w (c,d)\equiv 0 (t)}}\frac{|\Omega_Y|}{t} \\
& \ll Q^\epsilon |\Omega_Y|^2X^{2+2\delta} \sum_{Q_0 \leq q \leq Q} \frac{q^\epsilon}{q^2} \sum_{t|q} \frac{t^{1+\epsilon}}{t} \ll \chi X^2|\Omega_Y|/Q_0 = \chi^2 \frac{X^{2(1-\delta)}}{Q_0} \\
\end{align*}
For the error term, we have 
\begin{align*}
&\ll Q^\epsilon X^{2+2\delta}Y^{2\theta}|\Omega_Y| \sum_{Q_0 \leq q \leq Q} \frac{1}{q^2}\sum_{t|q} t^{1+\epsilon} \\
&\ll Q^\epsilon X^{2+2\delta}Y^{2\theta}|\Omega_Y| \sum_{Q_0 \leq q \leq Q} \frac{1}{q^2}q\\
&\ll  Q^\epsilon X^{2+2\delta}Y^{2\theta}|\Omega_Y| = \chi^2 \frac{X^{2(1-\delta)}}{Y^{2(\delta-\theta)}} \\
\end{align*}
which we can see will require $Y$ or $\delta$ to be relatively large, a restraint we'd hope to drop.  

\subsection*{Bounding $\mathcal{E}^2_>$}
\begin{myprop}
\begin{align*}
\mathcal{E}^2_> \ll X^{2\delta}Q^\epsilon |\Omega_Y|^2 Y^4 Q^{8}/X^3
\end{align*}
\end{myprop}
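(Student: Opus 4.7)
The plan is to estimate $\mathcal{E}^2_>$ directly from the Poisson expression $(\dagger)$ restricted to pairs $(q, q')$ with $\bar q > X$. In this off-diagonal regime several Fourier modes $(m, n) \neq (0, 0)$ contribute nontrivially; the bound of Proposition 4 is comparatively crude and reflects a balance between the size of the twisted inner character sum and the sheer number of such modes.

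First, since $\hat\Phi$ is supported in $[-1, 1]$, the factor $\hat\Phi(mX/\bar q)\hat\Phi(nX/\bar q)$ vanishes unless $|m|, |n| \leq \bar q / X$, so at most $O((\bar q / X)^2)$ modes contribute. Using $\|\hat\Phi\|_\infty \ll 1$ and moving absolute values inside the $(m, n)$-sum reduces the task to estimating, uniformly over $(q, q', m, n)$ with $\bar q > X$ and $|m|, |n| \leq \bar q/X$, the expression
\[
S(q, q', m, n) = \sum_{\w, \w' \in \Omega_Y} \Bigl| \sum_{c, d\,(\bar q)} \Xi(q; f_\w(c, d))\, \Xi(q'; f_{\w'}(c, d))\, e_{\bar q}(cm+dn) \Bigr|.
\]

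Next, expanding each $\Xi$ as an indicator minus $\rho$, the dominant piece of the inner sum is a twisted count of the intersection of the two quadratic loci $\{f_\w \equiv 0\,(q)\} \cap \{f_{\w'} \equiv 0\,(q')\}$ modulo $\bar q$; by the Chinese Remainder Theorem this factors into independent sums modulo $q_1$, $q_1'$, and $\tilde q$ (the pieces of $\bar q$ governed by $q$ alone, by $q'$ alone, and by both). The key arithmetic input is that the coefficients of the form $f_\w(c, d)$ are polynomials in the entries of $\w \in \Omega_Y$, hence integers of size $O(Y^D)$ where $D$ is the degree of $f$; this is the ingredient that propagates to the $Y^4$ factor in the final bound. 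A trivial bound of $O(\bar q^2)$ on the inner sum, or the finer $O(\bar q^{1+\epsilon})$ bound coming from the curve structure of each quadratic congruence, is enough in this off-diagonal regime.

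Combining the inner estimate with the trivial bound $|\Omega_Y|^2$ over $(\w, \w')$, the $O((\bar q/X)^2)$ count of Fourier modes, the Poisson prefactor $X^{2(\delta+1)}/\bar q^2$, and a divisor-type bookkeeping over pairs $(q, q')$ with $q, q' \leq Q$ (so that $\bar q \leq Q^2$ and the divisor function $d(\bar q) \ll Q^\epsilon$), one arrives, after careful accounting of the exponents, at the stated estimate. The main obstacle will be controlling the inner sum when $\bar q$ shares factors with the discriminants of $f_\w$ or $f_{\w'}$: in that case the quadratic forms degenerate modulo those primes and the character sum is larger than generic. Bounding these exceptional contributions in terms of $Y$ --- the discriminants being polynomial expressions in the entries of $\w, \w'$ and hence controlled by powers of $Y$ --- is what forces the particular exponent $Y^4$ to appear and constitutes the technical heart of the argument.
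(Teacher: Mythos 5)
The structural skeleton of your sketch is right — restrict $\hat\Phi$ to modes $|m|,|n|\leq\bar q/X$, factor the inner sum over $(c,d)$ modulo $\bar q$ by CRT into pieces governed by $q_1$, $q_1'$, $\tilde q$, and track how the coefficients of $f_\w$ grow with $Y$ — but the quantitative step that makes the proposition true is missing, and the bounds you offer in its place fail.

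You say a trivial $O(\bar q^2)$ or a curve-count $O(\bar q^{1+\epsilon})$ bound on the inner $(c,d)$-sum "is enough in this off-diagonal regime." It is not. Carrying the trivial $\bar q^2$ bound through the Poisson prefactor $X^2/\bar q^2$ and the $O((\bar q/X)^2)$ Fourier modes gives $\mathcal{E}^2_>\ll X^{2\delta}|\Omega_Y|^2\sum_{q,q'}\bar q^2\ll X^{2\delta}|\Omega_Y|^2 Q^6$, and the curve-count $\bar q^{1+\epsilon}$ gives $\ll X^{2\delta}|\Omega_Y|^2 Q^4$. Writing $Q=T^{D\alpha}$, $X=T^x$, $Y=T^{1-x}$ with $D=2$, the target $Y^4Q^8/X^3=T^{4-7x+16\alpha}$ beats $Q^4=T^{8\alpha}$ only when $7x<4+8\alpha$, and beats $Q^6=T^{12\alpha}$ only when $7x<4+4\alpha$; both fail in the regime the paper needs ($x$ near $1$, $\alpha\leq 5/16$). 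So the bound in the statement is genuinely sharper than what a curve count can produce, and your argument would not establish it.

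The missing ingredient is the cancellation in the twisted local sum: the paper proves (its Lemma 4, after CRT-splitting $S_3=S_4(q_1)S_4(q_1')S_5(\tilde q)$ and a $\gamma\mapsto\gamma\omega$ substitution that realizes the sum as a complete exponential sum over the curve $f(x_0\cdot\gamma)\equiv 0$) that for $(k,l,q)=1$ one has $|S_4(q;k,l;\w)|\leq (f_\w(l,-k),q)/q^2$, i.e. a gcd-type saving rather than the generic $q^{-1}$ coming from point-counting. One then discards one of the two gcd factors via $(f_{\w'}(l,-k),q_1')\leq f_{\w'}(l,-k)\ll(\bar q/X)^2Y^2$ (matrix entries are of size $Y$, the form is quadratic, so coefficients are $O(Y^2)$ — not $O(Y^D)$), and for the remaining gcd one strips the content of the quadratic $k\mapsto f_\w(l,-k)$ at a cost of another $Y^2$, writes $(f_\w(l,-k),q_1)=\sum_{t|q_1}t\cdot 1_{t|f_\w}$, and counts solutions $k\bmod t$ using the divisor bound, splitting $t\lessgtr\bar q/X$. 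This averaging over divisors $t$, not a pointwise bound, is where $Q^8/X^3$ actually comes from. Your closing paragraph gestures at "controlling the exceptional contributions in terms of $Y$" as the technical heart, which is correct as a diagnosis, but the proposal stops exactly there and replaces the needed lemma with bounds that lose the theorem.
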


 Now we rearrange the sum and expand the square to split the sum along the q-variable and then apply Poisson summation:
\begin{align*}
\mathcal{E}^2 &\leq T^\epsilon X^{2\delta} \sum_{Q_0\leq q, q' < \mathcal{Q}} \bigl| \sum_{\w, \w' \in \Omega_Y} \sum_{c, d} \Phi(\frac{c}{X})\Phi(\frac{d}{X})\Xi(q; Q_\w(c, d)) \Xi(q'; Q_{\w'}(c,d))\bigr| \\
&= T^\epsilon X^{2\delta}\sum_{Q_0 \leq q, q' \leq Q}\bigl| \sum_{\w, \w' \in \Omega_Y} \sum_{c, d (\bar q)} \Xi(q; f_\w(c, d)) \Xi(q'; f_{\w'}(c,d))\sum_{m, n \in \mathbb{Z}^2}\Phi(\frac{m}{X})\Phi(\frac{n}{X}) 1_{\{m\equiv c (\bar q)\}} 1_{\{n \equiv d (\bar q)\}}\bigr| \\
&= T^\epsilon X^{2\delta}\sum_{Q_0 \leq q, q' \leq Q}\bigl| \sum_{\w, \w' \in \Omega_Y} \sum_{c, d (\bar q)} \Xi(q; f_\w(c, d)) \Xi(q'; f_{\w'}(c,d))\sum_{m, n \in \mathbb{Z}^2}\Phi(\frac{m}{X})\Phi(\frac{n}{X}) \\ &\frac{1}{\bar q^2}\sum_{l (\bar q)} e_{\bar q} (l(m-c)) \sum_{k (\bar q)} e_{\bar q} (k(n-d))\bigr| \\
& = T^\epsilon X^{2\delta}\sum_{Q_0 \leq q, q' \leq Q}\bigl| \sum_{\w, \w' \in \Omega_Y} \frac{1}{\bar q^2}\sum_{c, d (\bar q)} \Xi(q; f_\w(c, d)) \Xi(q'; f_{\w'}(c,d)) e_{\bar q}(-cl-nk)\\\ &\sum_{m, n \in \mathbb{Z}^2}\Phi(\frac{m}{X})\Phi(\frac{n}{X})e_{\bar q}(ml + nk)\bigr| \\
& = T^\epsilon X^{2\delta} \sum_{Q_0 \leq q, q' \leq Q}\bigl|\sum_{\w, \w' \in \Omega_Y}\sum_{l, k (\bar q)}S_3(q, q'; l, k; \w, \w') \mathcal{I}(X; k, l; \bar {q})\bigr|
\end{align*}
where we have
\[ S_3(q, q'; k, l; \w, \w') =\frac{1}{\bar q^2} \sum_{c, d(q)} \Xi(q; f_\w(c, d))\Xi(q'; f_{\w'}(c, d)) e_{\bar q}(-ck-dl)\] and
 \[ \mathcal{I}(X; k, l; \bar q) = \sum_{m,n \in \mathbb{Z}} \Phi(\frac{m}{X})\Phi(\frac{n}{X})e_{\bar q}(ml + nk) \]. 
Now we use Poisson summation on $\mathcal{I}$, obtaining
\begin{align*}
 \mathcal{I}(X; k, l; \bar q) = \sum_{m,n \in \mathbb{Z}} \Phi(\frac{m}{X})\Phi(\frac{n}{X})e_{\bar q}(ml + nk) = (\sum_{m \in \Z} \Phi(\frac{m}{X})e_{\bar q}(ml)) (\sum_{n \in \Z} \Phi(\frac{n}{X})e_{\bar q}(nk))
\end{align*}
 To see this, compute the Fourier transform of the function $F(m) = \Phi( \frac{m}{X}) e_{\bar q}(ml)$: 
 \begin{align*}
 \hat F(m) &= \int_{-\infty}^\infty \Phi(\frac{t}{X})e_{\bar q}(tl)e(-tm)dt = \int_{-\infty}^\infty \Phi(\frac{t}{X})e(-t(m-l/\bar q))dt = X \int_{-\infty}^\infty \Phi(u)e(-Xu(m-l/\bar q))du \\
 &= X \hat\Phi(X(m - \frac{l}{\bar q})) 
 \end{align*}
 So, by Poisson summation we have 
 \begin{align*}
  \mathcal{I}(X; k, l; \bar q) &= X^2 \sum_{m, n \in \mathbb{Z}^2}\hat\Phi( X(m - \frac{l}{\bar q}))\hat\Phi( X(n - \frac{k}{\bar q})) \\
  & \ll X^2 1_{\{ l, k \leq \bar q/ X\}}
 \end{align*}
 
 Now, switching our focus to $S_3$, we introduce $S_4$ and $S_5$: 
 
\[ S_4(q; k, l; \w) = \frac{1}{q^2} \sum_{c, d(q)} \Xi(q; f_\w(c,d))e_q(-ck-dl) \]   
\[ S_5(q; k, l; \w, \w') = \frac{1}{q^2} \sum_{c, d(q)} \Xi(q; f_\w(c, d))\Xi(q; f_{\w'}(c, d)) e_q(-ck-dl)\]
Notice that $S_3(q, q'; k, l; \w, \w') = S_4(q; k, l; \w)S_4(q_1'; k, l; \w')S_5(\tilde q; k, l; \w, \w')$. Additionally, we will only use the trivial bound $|S_5|\leq 1$. We'll note that it is possible to save even more from the $S_5$ sum than in \cite{BK15}, but it proves unnecessary here. 
Now we break for a lemma concerning $S_4$.
\begin{mylem}
$|S_4(q; k, l; \w)| \leq \begin{cases}
\frac{f_\w(l, -k)}{q^2} &\mbox{ if } (k, l, q) =1, f=z \\
 \frac{(f_\w(k, l), q)}{q^2} &\mbox{if } (k, l, q) =1 , f=y \\
 \frac{(f_{\w^t}(k,l), q)}{q^2} &\mbox{if } (k, l, q) =1 , f=x \\
0 &\mbox {if } (k, l, q)>1
\end{cases}$
\end{mylem}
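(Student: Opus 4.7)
The plan is to exploit multiplicativity of $\Xi$ under CRT to reduce to $q = p$ prime, and then compute the resulting character sum by parametrizing the zero locus of $f_\omega \bmod p$ as a union of two lines.

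By the Chinese Remainder Theorem and the fact that $\Xi(\cdot\,;n)$ is multiplicative in its first argument, for squarefree $q$ one has the factorization
\[
S_4(q;k,l;\omega) = \prod_{p \mid q} S_4(p; k, l; \omega),
\]
so it suffices to prove the single-prime bound and multiply. Split the inner sum as
\[
p^2 S_4(p; k, l; \omega) = \sum_{c, d \,(p) \atop f_\omega(c,d) \equiv 0 \,(p)} e_p(-ck - dl) \;-\; \rho(p)\sum_{c,d\,(p)} e_p(-ck-dl).
\]
The second sum vanishes unless $(k,l) \equiv (0,0) \,(p)$, which is precisely the $(k,l,p) > 1$ regime; there the first sum equals $N_p$, the number of solutions of $f_\omega(c,d) \equiv 0\,(p)$, which is $2p-1$ in our setting (for $f = z$ we restrict to $p \equiv 1(4)$, as in Proposition 3 and Lemma 2). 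Since $\rho(p) = (2p-1)/p^2$, the two terms cancel exactly and $S_4(p;k,l;\omega) = 0$, yielding $|S_4(q;k,l;\omega)| = 0$ whenever $(k,l,q) > 1$.

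Now assume $(k,l,p) = 1$, so the $\rho$-sum vanishes and only the first sum survives. The key is that $f_\omega$ factors over $\mathbb{F}_p$ as a product of two linear forms in $(c', d') := (c,d)\omega$: for $f = y$ as $2c'd'$; for $f = x$ as $(d' - c')(d' + c')$; and for $f = z$ as $(c' + id')(c' - id')$ using $i \in \mathbb{F}_p$ with $i^2 \equiv -1$, which exists since $p \equiv 1(4)$. The zero set is therefore a union of two lines through the origin in $\mathbb{F}_p^2$. Change variables to the $(c', d')$-plane and correspondingly set $(k', l')^t := \omega^{-1}(k, l)^t$, so that $-ck - dl \equiv -(c'k' + d'l')\,(p)$. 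Each line in $(c',d')$ coordinates is a coordinate axis or diagonal, and contributes $\sum_{t \in \mathbb{F}_p} e_p(-t \Lambda_j) = p \cdot 1_{\Lambda_j \equiv 0\,(p)}$ for explicit linear forms $\Lambda_1, \Lambda_2$ in $(k', l')$ corresponding to the two factors. A direct computation identifies $\Lambda_1 \Lambda_2$ with the binary quadratic form in $(k, l)$ featured on the right-hand side of the lemma --- namely $f_\omega(l, -k)$ or $f_{\omega^t}(k, l)$, depending on the case.

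Subtracting the overcount at $(c,d) = (0,0)$ gives
\[
p^2 S_4(p;k,l;\omega) = p\bigl[1_{\Lambda_1 \equiv 0\,(p)} + 1_{\Lambda_2 \equiv 0\,(p)}\bigr] - 1.
\]
Since $\omega$ is invertible and $(k,l,p) = 1$, we have $(k', l') \not\equiv (0,0)\,(p)$, so $\Lambda_1$ and $\Lambda_2$ cannot vanish simultaneously and the indicator sum collapses to $1_{\Lambda_1 \Lambda_2 \equiv 0\,(p)}$. Consequently $|p^2 S_4(p;k,l;\omega)| \leq p$ when $p \mid \Lambda_1 \Lambda_2$ and equals $1$ otherwise, i.e.\ $|S_4(p;k,l;\omega)| \leq (\Lambda_1 \Lambda_2, p)/p^2$. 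Multiplying over $p \mid q$ yields the stated bound. The main obstacle is the bookkeeping change of variables: verifying, separately for each $f \in \{x, y, z\}$, that the product of the two linear forms in $(k', l')$ really does reassemble into the quadratic evaluation of $f_\omega$ or $f_{\omega^t}$ appearing in the statement. This is routine but notation-heavy linear algebra over $\mathbb{F}_p$, and is where the distinction between the three cases --- and the appearance of the transpose in the $f=x$ case --- has to be tracked carefully.
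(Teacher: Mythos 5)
Your proposal is correct and takes essentially the same route as the paper: reduce to primes by multiplicativity, dispatch $(k,l,q)>1$ via the vanishing of $S_1$, and for $(k,l,p)=1$ evaluate the character sum by writing the zero locus of $f_\omega \bmod p$ as two lines through the origin, arriving at $p^2 S_4(p;k,l;\omega) = p\bigl(1_{\{\Lambda_1\equiv 0\}}+1_{\{\Lambda_2\equiv 0\}}\bigr)-1$ exactly as the paper does case by case. One caveat for when you fill in the deferred bookkeeping: with your normalization $(k',l')^t = \omega^{-1}(k,l)^t$ the product $\Lambda_1\Lambda_2$ reassembles as $f$ evaluated at $(k,l)\omega^{-t}$, which differs from the lemma's stated $f_\omega$ (and from what the paper's own in-proof computation actually lands on, which is $f_{\omega^t}$) by a transpose and/or inverse of $\omega$ --- this is harmless downstream because only the coefficient sizes ($\ll Y^2$) and the degree in $k$ are used, but the literal match to the statement should not be asserted without verifying, and you may find the form you obtain is not the one printed.
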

\begin{proof}
Noticing that $S_4(q; k, l; \w) = \prod_{p | q} S_4(p; k, l; \w)$, as always we reduce to analyzing $S_4$ at primes. If $(k, l, q)>1$, one of the local factors is $S_1(p; \w)$ which is zero. Suppose then that $(k, l, q)=1$. 
We have $p^2S(p; k, l, \w) = \displaystyle\sum_{c,d(p)}(1_{f_\w(c, d) \equiv 0 (p)} - \rho(p)) e_p(-ck-dl) = \sum_{c,d(p)}1_{f_\w(c, d) \equiv 0 (p)} e_p(-ck-dl)$
Now, let $\w = \begin{pmatrix} \alpha & \beta \\ \g & \delta \end{pmatrix}$, and we see that the above is equivalent to
\begin{align*}
\sum_{c, d(p)}1_{f((\alpha c + \gamma d)(\beta c + \delta d)) \equiv 0 (p)}e_p (-ck -dl) \\
\end{align*}
For simplicity we now subtract off the $c=d=0$ contribution of 1. Writing $\gamma= \begin{pmatrix} * & * \\ c & d \end{pmatrix} \in \G(p) \backslash \G$, we have $p^2S_4(p; k, l; w) - 1 = \displaystyle\sum_{\g \in \G(p)\backslash \G} 1_{f(x_0 \cdot \g \w) \equiv 0 (p)} e_p(-x_0 \cdot \g \cdot (k, l)^t)$. Changing $\g$ to $\g\w$, this becomes
\begin{align*}
p^2S_4(p; k, l; w) - 1 &= \sum_{\g \in \G(p)\backslash \G} 1_{f(x_0 \cdot \g) \equiv 0 (p)} e_p(-x_0 \cdot \g \w \cdot (k, l)^t) \\
&= \sum_{\g \in \G(p)\backslash \G} 1_{f(x_0 \cdot \g) \equiv 0 (p)} e_p(-(c, d) \cdot (\alpha k + \beta l, \gamma k + \delta l)^t)\\
& = \sum_{\g \in \G(p)\backslash \G} 1_{f(x_0 \cdot \g) \equiv 0 (p)} e_p(-c(\alpha k + \beta l)-d(\gamma k + \delta l))
\end{align*}
The claimed bound for $f$ was already proven in \cite{BK15}. We treat the two unsolved cases, $f=x$ and $f=y$, separately. Let $f=y$. Then in order to obtain a solution to $h(x_0 \cdot \g) = 0$, we need $c$ or $d$ to be zero. If this is the case, the other is not zero, so we may write
\begin{align*}
p^2S_4^y(p; k, l; \w) - 1&= \sum_{d \neq 0} e_p(-d(\gamma k + \delta l)) + \sum_{c \neq 0} e_p(-c(\alpha k + \beta l) \\
 S_4^y(p; k, l, \w) &= \begin{cases} \frac{p-1}{p^2} &\mbox{ if } \gamma k +\delta l \equiv 0 (p) \text{ or } \alpha k + \beta l \equiv 0(p) \\
 \frac{-1}{p^2} &\mbox{ else } \\
\end{cases}
\end{align*}
\\ Changing things around for notational simplicity, we see that this condition is equivalent to $y_{\w}(k, l) = y((k, l) \cdot \w) \equiv 0 (p) $, as stated.
\\ Now for the last case, $f =x=d^2-c^2$, we notice that the solutions are of the form $d = \pm c$ with $c \neq 0$. Expanding as above we obtain
\begin{align*}
p^2S_4^x(p; k, l; \w)-1 &= \sum_{c\neq 0 (p)} e_p(-c((\alpha + \gamma)k + (\beta + \delta)l)) + \sum_{c\neq 0 (p)} e_p(-c((\alpha - \gamma)k +(\beta - \delta)l)) \\
 S_4^x(p; k, l, \w) &= \begin{cases} \frac{p-1}{p^2} &\mbox{ if } (\alpha+\gamma)k + (\beta+\delta)l \text{ or } (\alpha-\gamma)k + (\beta-\delta)l \equiv 0(p) \\
 \frac{-1}{p^2} &\mbox{ else } \\
 \end{cases}
\end{align*}
Now, we notice that $x(c,d)=d^2-c^2 = (d+c)(d-c)$. Therefore the first condition is equivalent to $x((k, l) \cdot \w^t) = 0$ as claimed.
\end{proof}

Now, we finally treat $\mathcal{E}_>^2$. We will do this for the case of $f=z$- note that replacing $(l, -k)$ with $(k, l)$ makes no difference, nor does transposing the group $\Gamma$, so the cases of $x$ and $y$ follow in exactly the same way. Inserting estimates for $S_4$ and $S_5$ gives
\[ \frac{\mathcal{E}^2_<}{T^\epsilon X^{2\delta}} \ll Q^\epsilon X^2 \displaystyle \sum_{X<\bar q< Q^2} \sum_{q_1q_1'\tilde q=\bar q} \frac{ \tilde q^2 }{\bar q^2} \sum_{\omega, \omega' \in \Omega_Y} \sum_{0 \leq l, k \leq \frac{\bar q}{X}} (f_\omega(l, -k), q_1) (f_{\omega'}(l, -k), q_1') \] 

Our first step is to simply use the bound $(f_{\omega'}(l, -k), q_1') \leq f_{\omega'}(l, -k) \ll (\frac{\bar q}{X})^2Y^2$ which follows from the sizes of the matrix entries. Now, notice that for fixed $\omega$ and $l$, $f_\omega(l, -k)$ is merely a quadratic, let's say $f(k)= ak^2+bk+c$. For reasons that will soon be clear, we wish to assume that $(a, b, c, q_1) = 1$. As such, set gcd$(a, b, c, q_1)=d$, and let $f'(k) = f(k)/d$. Now, we have $(f(k), q_1) = d (f(k)/d, q_1/d) \leq d(f'(k), q_1)$. Moreover, $d \leq a=C \ll Y^2$ by the construction of $f$.  Therefore, what we have shown is that for fixed $\omega, l$, we may replace $f_\omega(l, -k)$ with a polynomial $p_{\omega, l}(k)$ with coprime coefficients at a cost of at most $O(Y^2)$. 

\ Now, if we take some fixed squarefree modulus $t = p_1 \cdots p_l$, we have $f(k) \equiv 0(t) \iff f(k)\equiv 0 (p_i) \forall i$, and as atleast one polynomial coefficient is a unit mod $p_i$, there are at most two solutions to this mod $p_i$. Using the Chinese Remainder Theorem, we see that we have at most $2^l$ solutions to $f(k) \equiv 0(t)$ modulo $t$, unless we fail this coprimality condition. As $t$ is squarefree, we have $2^l= d(t) \ll t^\epsilon$. Therefore, counting up to $\bar q/X$, we can break $\frac{\bar{q}}{X}$ into $\lceil \frac{\bar q}{Xt}\rceil \leq \frac{\bar q}{Xt}+1$ units of length t, obtaining a total of $\ll (\frac{\bar q}{Xt}+1)t^{1+\epsilon}$ solutions.  Now we return to the calculation, remembering that $\tilde q \bar q \leq Q^2$ and rewriting the sum to take advantage of the above observations: 

\begin{align*} &\mathcal{E_>}^2 \ll Q^\epsilon |\Omega_Y| Y^2 \displaystyle\sum_{X<\bar{q}<Q^2} \sum_{q_1q_1'\tilde{q} = \bar{q}}\tilde{q}^2 \sum_{\omega \in \Omega_Y}  \sum_{0 \leq k, l \leq \bar{q}/X \atop{(k, l) \neq (0, 0)}} (f_\omega(l, -k), q_1) \\
  &\ll Q^\epsilon |\Omega_Y| Y^2 \displaystyle\sum_{X<\bar{q}<Q^2} \sum_{q_1q_1'\tilde{q} = \bar{q}}\tilde{q}^2 \sum_{\omega \in \Omega_Y}  \sum_{0 \leq l \leq \bar{q}/X} \sum_{t|q_1} \sum_{0 \leq k \leq \bar q/X \atop{ f_\omega(l, -k) \equiv 0 (t)}} t  \ll \mathcal{E}_1 + \mathcal{E}_2\\ 
\end{align*}

We break the sum into two components, $\mathcal{E}_1, \mathcal{E}_2$ corresponding to $t\leq \bar{q}/X$ and $t> \frac{\bar{q}}{X}$ respectively. We use the fact that at most $t^{1+\epsilon}$ pairs $(l, k)$ mod $t$ can solve $f_\omega(l, -k) \equiv 0 (t)$. We treat them individually:
\begin{align*}
\mathcal{E}_1 &\ll Q^\epsilon |\Omega_Y| Y^2\displaystyle\sum_{X<\bar{q}<Q^2} \sum_{q_1q_1'\tilde q = \bar q}\tilde q^2\sum_{\omega \in \Omega_Y} \sum_{t| q_1 \atop{ t\leq\bar{q}/X}} \sum_{l, k \leq \bar{q}/X} t1_{f_\omega(l, -k) \equiv 0 (t)} \\
&\ll Q^\epsilon |\Omega_Y| Y^2\displaystyle\sum_{X<\bar{q}<Q^2} \sum_{q_1q_1'\tilde q = \bar q}\tilde q^2 \sum_{\omega \in \Omega_Y} \sum_{t| q_1 \atop{ t\leq\bar{q}/X}} t(\frac{\bar{q}^2/X^2}{t^2} t^{1+\epsilon}) \\
& \ll Q^\epsilon |\Omega_Y| Y^2X^{-2}\displaystyle\sum_{X<\bar{q}<Q^2} \sum_{q_1q_1'\tilde q = \bar q}\tilde q^2 \bar q^2 \\
&\ll Q^\epsilon|\Omega_Y|^2Y^2 Q^6/X^2
\end{align*}
The term which will in fact dominate is $\mathcal{E}_2$: 
\begin{align*}
\mathcal{E}_2 &\ll Q^\epsilon|\Omega_Y|Y^2 \displaystyle\sum_{X<\bar q< Q^2} \sum_{q_1q_1'\tilde{q} = \bar q} \tilde q^2\sum_{\omega \in \Omega_Y} \sum_{t | q_1 \atop { t \geq \bar q/X}}\sum_{l, k \leq \bar q/X} t 1_{f_\omega(l, -k) \equiv 0 (t)} \\
&\ll Q^\epsilon|\Omega_Y|^2Y^2 \displaystyle\sum_{X<\bar q< Q^2} \sum_{q_1q_1'\tilde{q} = \bar q} \tilde q^2 \sum_{t | q_1 \atop {\bar q/X \leq t \leq (\bar q/X)^2Y^2}} t \bar{q}/X \\
& \ll Q^\epsilon |\Omega_Y|^2 Y^4 \displaystyle \sum_{X<\bar q<Q^2} \sum_{q_1q_1'\tilde{q}=\bar{q}} \tilde q^2 \frac{\bar q^3}{X^3} \\
& \ll Q^\epsilon |\Omega_Y|^2 Y^4 X^{-3} Q^2 \displaystyle\sum_{\bar q<Q^2} (\bar q)^2 =Q^\epsilon |\Omega_Y|^2 Y^4 Q^{8}/X^3\\
\end{align*}
As $Q^2>X$, we have completed the proof of Theorem 7. 

\section{Conclusion}
\begin{mythm}
Fix $\epsilon>0$. There is an effective $\delta_0(\epsilon)<1$ such that for $\delta_\G > \delta_0(\epsilon)$, $f = x, y, z$, $\epsilon>0$, $\beta_f(p)=\beta(p)$ as in Theorem 5, the sequences $a_T(n)$ defined above satisfy 
\begin{align*}
|\mathcal{A}_\q| = \beta(\q)\chi + r(\q) \\
\end{align*}
with $r(\q)$ having a level of distribution of $\frac{5}{16}-\epsilon$.
\end{mythm}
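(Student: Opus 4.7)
The plan is to gather the error estimates from Theorems 6 and 7, take square roots where Cauchy-Schwartz was applied, and then optimize over the two free parameters $Q_0$ and $x$ (with $X = T^x$, $Y = T^{1-x}$). Since $f \in \{x,y,z\}$ has degree $2$ in $(c,d)$ and the entries of $\g\w$ are $O(XY) = O(T)$, the sequence $a_T(n)$ is supported on $[-N,N]$ with $N \asymp T^2$. Proving level of distribution $\alpha = 5/16 - \epsilon$ therefore reduces to establishing $\sum_{\q < T^{5/8 - \epsilon'}} |r(\q)| \ll \chi^{1-\epsilon''}$ for appropriate $\epsilon', \epsilon'' > 0$.

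Setting each of the four error contributions (from $r^{(1)}$, $r^{(2)}$, $\mathcal{E}_{\leq}$, $\mathcal{E}_{>}$) to be $\ll \chi^{1-\epsilon}$ produces the following constraints: (i) $Q_0 < X^{2(\delta-\theta)}$ from Theorem 6's bound on $r^{(1)}$; (ii) $Q_0 > X^{2(1-\delta)}$ from the first term in Proposition 1; (iii) $x < (\delta-\theta)/(1-\theta)$ from the sharper bound $\chi^2 X^{2(1-\delta)}/Y^{2(\delta-\theta)}$ in the second term of Proposition 1; and (iv) $Q^8 < X^{3+2\delta}/Y^4$ from Proposition 2, obtained after canceling $|\W_Y|^2 \asymp Y^{4\delta}$ against $\chi^2 = T^{4\delta}$. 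Rewriting (iv) in terms of $T$ yields $Q < T^{((7+2\delta)x - 4)/8}$, isolating the governing exponent.

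Compatibility of (i) and (ii) forces $1-\delta < \delta - \theta$, i.e., $\delta > (1+\theta)/2$, which is easily met using Gamburd's bound $\theta \leq 5/6$ once $\delta$ is close to $1$. To push the level of distribution upward, I would send $x$ toward the bound in (iii): as $\delta \to 1$, $(\delta-\theta)/(1-\theta) \to 1$, so $x$ may be taken arbitrarily close to $1$. Substituting $x = 1 - o(1)$ and $\delta = 1 - o(1)$ into (iv) yields the exponent $(9-4)/8 = 5/8$, hence $Q < T^{5/8 - \epsilon}$ and $\alpha = 5/16 - \epsilon$.

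The hard part will be balancing the epsilons quantitatively: for a given $\epsilon > 0$, I would fix $x = 1 - c\epsilon$ for an appropriate constant $c$, then solve (iii) for the corresponding threshold $\delta_0(\epsilon) < 1$, and verify that $Q_0$ can be chosen in the window $(X^{2(1-\delta)}, X^{2(\delta-\theta)})$ with $\epsilon$-slack in every inequality. The three cases $f = x, y, z$ proceed identically modulo the case analysis in Lemma 4, so a single optimization handles them uniformly; the effective $\delta_0(\epsilon)$ extracted from this bookkeeping then feeds into the sieve in the concluding section to deliver the saturation bounds of the main theorem.
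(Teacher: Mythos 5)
Your proposal is correct and follows essentially the same route as the paper's Conclusion: you collect the error estimates from Theorems~6 and~7, take square roots to undo the Cauchy--Schwartz, translate each inequality into a constraint in the exponents $(x, y, \alpha_0, \alpha, \delta, \theta)$, observe the compatibility requirement $\delta > (1+\theta)/2$ that makes the $Q_0$ window nonempty, and then send $x, \delta \to 1$ in the governing constraint $8D\alpha + 4y < (3+2\delta)x$ (with $D=2$) to obtain $\alpha < 5/16$. Your constraints (i)--(iv), including the rewrite $Q < T^{((7+2\delta)x-4)/8}$, are precisely the paper's system~(3)--(7), and your remark that $(\delta - \theta)/(1-\theta) \to 1$ as $\delta \to 1$ (with $\theta = 5/6$ from Gamburd) is exactly how the paper lets $x$ approach~$1$, so the two proofs coincide.
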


We will now prove Theorem 8 and our main result Theorem 4 at once. First, we relay the simple trick that makes it possible to analyze areas and products using simply $f \in \{x, y, z\}$. The key observation is that no two of $x,y$ and $z$ are ever simultaneously zero modulo $q$ for any $q$ squarefree. Precisely, if $\gamma = \begin{pmatrix} a & b \\ c& d \end{pmatrix} \in \G \leq SL(2, \Z)$, it is impossible for more than one of $d^2 - c^2$, $cd$, and $c^2+ d^2$ to be congruent to 0 mod $q$. This is equivalent to no two of them being $ \equiv 0 (p)$ for any $p$ prime. Now suppose $c^2 + d^2, d^2 - c^2 \equiv 0 (p)$. Adding them together we obtain $2d^2 \equiv 0 (p)$, which, as we will assume $p \neq 2$, implies $d = 0$. Now $c^2 + d^2 \equiv 0 (p)$ implies $c$ is also 0, which is impossible as $\g \in SL(2, \Z)$. Now suppose $cd =0 (p)$. Without loss of generality, $c \equiv 0 (p)$. Now, regardless of if our other polynomial is $x$ or $z$, we have $d^2 \equiv 0 (p)$ and thus $d=0$, again a contradiction. Therefore we see that for any element $\g \in SL(2, \Z)$, any $q$ squarefree, we have at most one of $x(x_0 \cdot \g), y(x_0 \cdot \g), z(x_0 \cdot \g) \equiv 0 (q)$.
\\ We have thus shown that $1_{xyz \equiv 0 (q)} = 1_{x \equiv 0 (q)} + 1_{y \equiv 0 (q)} + 1_{z \equiv 0 (q)}$. This allows us to write:
\begin{align*}
|\mathcal{A}^{xyz}_\q| &= \sum_{\g \in \G} \sum_{\w \in \Omega_Y} \Upsilon_X(\g)1_{\{xyz(x_0 \cdot \g\w) \equiv 0 (q)\}} \\
& = \sum_{\g \in \G} \sum_{\w \in \Omega_Y} \Upsilon_X(\g) 1_{\{x(x_0 \cdot \g\w) \equiv 0 (q)\}} + \sum_{\g \in \G} \sum_{\w \in \Omega_Y} \Upsilon_X(\g)1_{\{y(x_0 \cdot \g\w) \equiv 0 (q)\}} + \\ &\sum_{\g \in \G} \sum_{\w \in \Omega_Y} \Upsilon_X(\g)1_{\{z(x_0 \cdot \g\w) \equiv 0 (q)\}} \\
&= |\mathcal{A}^x_\q| + |\mathcal{A}^y_\q| + |\mathcal{A}^z_\q| \\
\end{align*}
Similarly, $|\mathcal{A}^{xy}_\q| = |\mathcal{A}_\q|^x + |\mathcal{A}_\q|^y$. Recall that we denote the degree of our form $g \in \{z, \frac{1}{12}xy, \frac{1}{60}xyz\}$ in variables $(c, d)$ by $D$, so $D \in \{2, 4, 6\}$. 

Collecting error terms from Theorems 5-7, for $\mathcal{A}_q^f$ with $f \in \{x, y, z\}$ we have
\begin{align*}
&\sum_{q<Q} |r^{(1)}(\q)| \leq \chi \frac{ \Q^\epsilon \Q _0}{X^{2(\delta-\theta)}}\\
&\sum_{q< \Q} |r^{(2)}(\q)| \leq \chi \frac{\Q^\epsilon}{\Q _0} \\
&\mathcal{E}^2_< \leq \chi \Q^\epsilon (\frac{X^{1-\delta}}{Q_0} + \frac{X^{1-\delta}}{Y^{\delta-\theta}}) \\
&\mathcal{E}^2_> \leq \chi \Q^\epsilon \frac{Q^4Y^2}{X^{(3+2\delta)/2}}
\end{align*}
Recall that $\Q = N^\alpha$. Then $N=T^D$ in the general case of $|\mathcal{A}_\q^g|$. Thus we have $\Q _0 = T^{D\alpha_0}$, $Y=T^y$, $X=T^x$. For all of the above to hold, we need
\begin{align}
D\alpha_0 < 2(\delta-\theta)x \\
D\alpha_0 > 0 \\
x(1-\delta)< \alpha_0 \\
x(1-\delta)<y(\delta -\theta) \\
8d\alpha + 4y < (3+2\delta)x
\end{align}
To choose $\alpha_0$ we require $d(1-\delta)< 2(\delta- \theta)$, which is satisfied for say $\delta>.95$. To finish the proof of Theorem 8, observe that if we take $d=2$ and $\delta$ and $x$ very close to 1 equation (5) is satisfied and the last equation reduces to $\alpha< \frac{(3+2\delta)x - 4y}{16}$, which we can make larger than $\frac{5}{16} - \epsilon$ for any $\epsilon$ with appropriate choices of $x, \delta$. 
\\ \indent Now that we have the internal tools we'll need to complete Theorem 4, we introduce the remaining external tools we'll need. In our case, the sieve dimension $\kappa$ is the number of irreducible polynomials comprising a polynomial, and representing the easiest lower bound on the saturation number. The case with the best results is $\kappa=1$ corresponding to $z= c^2 + d^2$, for this we use the linear sieve developed by Greaves in \cite{G86}. For higher dimensional sieves we use the Diamond-Halberstam-Richert sieve developed in \cite{DHR96} and \cite{DH97}. The following computations of $\delta_0$ are the final ingredients to complete the proofs. 
\\ \indent Continuing the game we played above, supposing for now that $\delta>.95$, we are reduced to finding satisfactory values of $x, \delta,$ and  $\alpha$ for the last two equations. Recall that for $\delta>5/6$, we take $\theta=5/6$. Thus our two equations are equivalent to
\begin{align*}
x(1-\delta) < (1-x)(\delta-5/6) \Leftrightarrow x<6\delta-5 \\
8D\alpha + 4(1-x) < (3+2\delta)x \Leftrightarrow \frac{8D\alpha + 4}{7+2\delta} < x \\
\end{align*}
Treating this as a quadratic in $\delta$, we see that we will be able to solve the system as long as $(7+2\delta)(6\delta-5)>8D\alpha+4$ for some $5/6< \delta<1$. Expanding we obtain the quadratic $12\delta^2+32\delta - 8D\alpha - 39$ which has solutions $\frac{-32 \pm \sqrt{32^2 + 48(8D\alpha+39)}}{24}$. Denote the positive solution by $\delta_0$.
\\ \textbf{Hypotenuses:} From Greaves, as long as $\alpha>\frac{1}{4-.103974}$ we can attain $4$-almost primes. This gives us $\delta_0 \sim .983994188$. 
\\ \indent For the higher dimensional sieves, we use the Diamond-Halberstam-Richert sieve from \cite{DH97} and \cite{DHR96}. For the weights $\beta_\kappa$ computed there, using the simplification found in \cite{LS10} we can obtain any $R$ greater than 
\begin{align*}
\inf_{0<\zeta<\beta_k}m_{\alpha, \kappa}(\zeta) = \inf_{0<\zeta<\beta_k}\frac{1}{\alpha}(1 + \zeta - \frac{\zeta}{\beta_\kappa}) -1 + (\kappa + \zeta)\log(\frac{\beta_\kappa}{\zeta}) - \kappa  + \zeta\frac{\kappa}{\beta_\kappa} \\
\end{align*}
We note that $\beta_4 = 9.0722...$, $\beta_5= 11.5347...$ . 
\\ \textbf{Areas:}
We have $\kappa=4$, $D=4$, and optimizing $m_{\alpha, \kappa}(\zeta)$ with $\alpha=5/32$ gives $R>17.5...$. Seeing that the best $R$ we can attain is $18$, we seek to minimize $\alpha$ respecting this $R$. Using mathematica, we see that this is $\alpha \sim .1483334$. Now we compute $\delta_0 \sim .9954718$. Note that using $\alpha = 7/48$, the value implied from \cite{BK15}, the best R we can produce is 19.
\\ \textbf{Product of Coordinates:} We have $\kappa=5$, $D=6$, and optimizing $m_{\alpha, \kappa}(\zeta)$  with $\alpha = 5/48$ gives $R> 25.4...$, showing that our optimal R value is $26$. Again, we note that using $\alpha=7/72$, the value derived from \cite{BK15}, the best R we can produce is 27. Now again we compute that as long as $\alpha> .09980986..$, we can obtain $R=26$. Using this, our critical $\delta_0 \sim .99626261$.
\bibliographystyle{plain}
\bibliography{mookin}
\end{document}